\newtheorem{thm}{Theorem}[section]
\newtheorem{cor}[thm]{Corollary}
\newtheorem{lem}[thm]{Lemma}
\begin{document}

\title[Bounds for boxicity of some classes of graphs]{Bounds for boxicity of some classes of graphs}

\author{T. Kavaskar}
\address{Department of Mathematics, Central University of Tamil Nadu, Thiruvarur-610005, India.}
\email{t\_kavaskar@yahoo.com}

\keywords{Boxicity, Circular clique graph, Generalized join of graphs, Zero-Divisor Graph.}
\subjclass[2010]{05C62, 05C15.} 

\begin{abstract}
Let $box(G)$ be the boxicity of a graph $G$, $G[H_1,H_2,\ldots, H_n]$ be the $G$-generalized join graph of $n$-pairwise disjoint graphs $H_1,H_2,\ldots, H_n$, $G^d_k$ be a circular clique graph (where $k\geq 2d$) and $\Gamma(R)$ be the zero-divisor graph of a commutative ring $R$.
In this paper, we prove that $\chi(G^d_k)\geq box(G^d_k)$, for all $k$ and $d$ with $k\geq 2d$. This generalizes the results proved in \cite{Aki}. Also we obtain that $box(G[H_1,H_2,\ldots,H_n])\leq \mathop\sum\limits_{i=1}^nbox(H_i)$. As a consequence of this result, we obtain a bound for boxicity of zero-divisor graph of a finite commutative ring with unity. In particular, if $R$ is a finite commutative non-zero reduced ring with unity, then $\chi(\Gamma(R))\leq box(\Gamma(R))\leq 2^{\chi(\Gamma(R))}-2$. where $\chi(\Gamma(R))$ is  the chromatic number of $\Gamma(R)$. 
Moreover, we show that if $N= \prod\limits_{i=1}^{a}p_i^{2n_i} \prod\limits_{j=1}^{b}q_j^{2m_j+1}$ is a composite number, where $p_i$'s and $q_j$'s are distinct  prime numbers, 
then $box(\Gamma(\mathbb{Z}_N))\leq \big(\mathop\prod\limits_{i=1}^{a}(2n_i+1)\mathop\prod\limits_{j=1}^{b}(2m_j+2)\big)-\big(\mathop\prod\limits_{i=1}^{a}(n_i+1)\mathop\prod\limits_{j=1}^{b}(m_j+1)\big)-1$, 
where $\mathbb{Z}_N$ is the ring of integers modulo $N$. 
Further, we prove that, $box(\Gamma(\mathbb{Z}_N))=1$ if and only if  either $N=p^n$ for some prime number $p$ and some positive integer $n\geq 2$ or $N=2p$ for some odd prime number $p$. 

\end{abstract}

\maketitle

\section{Introduction}
In this paper, all graphs considered are simple and undirected. Given a non-empty set $\Upsilon$ and a collection $\mathbf{D}$ of subsets of $\Upsilon$, the \textit{intersection graph} of $\mathbf{D}$ is defined as the graph with vertex set $\mathbf{D}$ such that two elements of $\mathbf{D}$ are adjacent if and only if their intersection is non-empty. An $\ell$-box is the Cartesian product $[x_1,\ y_1]\times \ldots \times [x_{\ell},\ y_{\ell}]$ of $\ell$ closed and bounded intervals of the real line. The \textit{boxicity} of a graph $G=\left(V(G),E(G)\right)$, denoted by $box(G)$, is the least  positive integer $\ell$ such that $G$ is isomorphic to the intersection graph of a family of $\ell$-boxes in Euclidean $\ell$-space. 
From a computational point of view, the boxicity of a graph is an NP-hard parameter, which was proved by Yannakakis \cite{Mih}. However, various bounds have been found in terms of the maximum degree, acylic chromatic number, weak $r$-coloring number, genus, etc. See for instance \cite{Adi}, \cite{Cha1}, \cite{Cha2}, \cite{Cha3}-\cite{Esp2}. 
Boxiity is used as a measure of the complexity of ecological \cite{Rob1} and social \cite{Free} networks, and has applications in fleet maintenance \cite{Ops}. 
In \cite{Cha2}, S. Chandran et al., proved that $box(G)\leq 2\Delta^2(G)$ for any graph $G$, where $\Delta(G)$ is the maximum degree of $G$. This bound was improved by L. Esperet in \cite{Esp3} to $box(G)\leq \frac{\left\lfloor  \Delta(G)^2 \right\rfloor}{2}+2$. 


For a given finite class of graphs $H_i=(V,E_i)$, $i=1,2,\ldots, k$ with same vertex set $V$, 
the \textit{intersection} $H_1\cap H_2\cap \ldots \cap H_k$ of the graphs $H_1,H_2,\ldots, H_k$ is the graph with vertex set $V$ and edge set $E_1\cap E_2\cap\ldots \cap E_k$. A graph $G$ is said to be an \textit{interval graph} if there is a function $f$ from the vertex set of $G$ into a set of closed and bounded intervals in the real line such that $uv\in E(G)$ if and only if $f(u)\cap f(v)\neq \emptyset$. Such a function $f$ is called an \textit{interval representation} of $G$. Note that $box(G)=1$ if and only if $G$ is an interval graph.

Let us recall that a result proved by Roberts in \cite{Rob}. 

\begin{thm}[\cite{Rob}]\label{1a}
Let $G$ be a graph. Then $box(G)\leq \ell$ if and only if there exist
$\ell$ interval graphs $I_1,\ldots, I_{\ell}$ with vertex set $V(G)$ such that $E(G)=E(I_1)\cap\ldots \cap E(I_{\ell})$ holds.
\end{thm}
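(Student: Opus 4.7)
The plan is to prove both directions by a direct dictionary between $\ell$-box representations and $\ell$-tuples of interval representations, using the fact that two axis-aligned boxes in $\mathbb{R}^\ell$ meet if and only if they meet in every coordinate projection.

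First I would handle the forward direction. Assuming $box(G)\leq \ell$, fix an intersection representation $v\mapsto B_v=[x_1^v,y_1^v]\times\cdots\times[x_\ell^v,y_\ell^v]$ of $G$ by $\ell$-boxes. For each coordinate $j\in\{1,\ldots,\ell\}$ the assignment $v\mapsto [x_j^v,y_j^v]$ defines an interval graph $I_j$ on vertex set $V(G)$. Since $B_u\cap B_v\neq\emptyset$ is equivalent to the simultaneous nonemptiness of all $\ell$ coordinate intersections $[x_j^u,y_j^u]\cap[x_j^v,y_j^v]$, one immediately gets $uv\in E(G)$ iff $uv\in E(I_j)$ for every $j$, that is, $E(G)=E(I_1)\cap\cdots\cap E(I_\ell)$.

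Next I would handle the reverse direction. Given interval graphs $I_1,\ldots,I_\ell$ on $V(G)$ with $E(G)=\bigcap_{j=1}^{\ell} E(I_j)$, pick for each $j$ an interval representation $f_j$ of $I_j$ by closed bounded intervals. Define for each vertex $v$ the $\ell$-box $B_v:=f_1(v)\times\cdots\times f_\ell(v)$. Then $B_u\cap B_v\neq\emptyset$ iff $f_j(u)\cap f_j(v)\neq\emptyset$ for every $j$ iff $uv\in E(I_j)$ for every $j$ iff $uv\in E(G)$. Hence the map $v\mapsto B_v$ realizes $G$ as the intersection graph of a family of $\ell$-boxes, giving $box(G)\leq \ell$.

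The hard part here is essentially nonexistent; the whole argument is a direct translation once one notes that intersection of axis-aligned boxes is coordinatewise. The only point deserving a small amount of care is to ensure that every interval graph admits an interval representation using \emph{closed and bounded} intervals (matching the definition adopted in the excerpt), so that the products $f_1(v)\times\cdots\times f_\ell(v)$ are genuine $\ell$-boxes in the sense used to define boxicity.
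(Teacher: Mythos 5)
Your proof is correct, and it is the classical argument: the paper itself offers no proof of this statement, quoting it directly from Roberts \cite{Rob}, and your coordinatewise dictionary (projecting an $\ell$-box representation to $\ell$ interval graphs, and conversely taking products $f_1(v)\times\cdots\times f_\ell(v)$) is exactly how the original result is established. Two small points you handle or should make explicit: when $box(G)<\ell$ the forward direction needs a trivial padding step (append dummy factors $[0,1]$, equivalently add complete interval graphs to the family), and, as you correctly flag, one must invoke the standard fact that every finite interval graph admits a representation by \emph{closed and bounded} intervals so the products are genuine $\ell$-boxes.
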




For a given $S\subseteq V(G)$, the \textit{subgraph of $G$ induced} by $S$ is denoted by $\langle S\rangle$. 
A \textit{clique} in $G$ is a complete subgraph of $G$. The \textit{clique number} of a graph $G$, denoted by $\omega(G)$, is the largest size of a clique in $G$. A subset $U\subseteq V(G)$ is said to be \textit{independent} if no two vertices in $U$ are adjacent in $G$.

Let $k$ be a positive integer. A \textit{proper $k$-coloring} of a graph $G$ is a function from $V(G)$ into the set of $k$ colors such that no two adjacent vertices receive the same color. 
The \textit{chromatic number} of a graph $G$, denoted by $\chi(G)$, is the least positive integer $k$ such that there exists a proper $k$-coloring of $G$.

Let $k$ and $d$ be two positive integers with $k\geq 2d$. The well-known \textit{circular clique} $G^d_k$ has the vertex set $V(G^d_k)= \{a_j\ |\ 0\leq j\leq k-1\}$
and the edge set $E(G^d_k)=\{a_ia_j | \ d\leq |i-j|\leq k-d\}$. The circular clique graph $G^d_k$ is a special class of circulant graphs which play an important role in the study of circular chromatic number. This can be seen in  \cite{Vince, Xu}. Note that, the circular clique graph is vertex-transitive. 

In \cite{Bho}, Bhowmick et al., proved that $box(G)\leq \chi(G)$ holds for any graph $G$ with no asteroidal triples. This result is not true in general. For example, the graph $G$ obtained from a complete bipartite graph with $2n$ (where $n\geq 5$) vertices by removing a perfect matching has $box(G)=\left\lceil \frac{n}{4}\right\rceil > 2 = \chi(H)$ (see \cite{Cha}). 
S. Charndran et al., in \cite{Cha0} observed that almost all graphs have boxicity more than their chromatic number by using probabilistic method. Interestingly, the family of graphs with boxicity at most their chromatic number is also not narrow. 
So it is interesting to characterize graphs $G$ for which $\chi(G)\geq box(G)$ holds. 
In this direction, Akira Kamibeppu \cite{Aki} gave a sufficient condition for $box(G) \leq \chi(G)$. Moreover they showed that $box(G) \leq \chi(G)$ hold for a special class of circular clique $G^d_k$ with an asteroidal triple.


\begin{thm}[\cite{Aki}]\label{1.1a}
For a circulant graph $G^d_{nd}$ with $n \geq 2$ and $d\geq 1$, the inequality
$\chi(G^d_{nd})\geq box(G^d_{nd})$ holds.
\end{thm}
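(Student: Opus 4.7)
The plan is to apply Roberts' theorem (Theorem~\ref{1a}): I will construct $n$ interval graphs $I_0,I_1,\ldots,I_{n-1}$ on the vertex set of $G^d_{nd}$ whose edge sets intersect in exactly $E(G^d_{nd})$. First note that $\chi(G^d_{nd})=n$: the map $a_j\mapsto \lfloor j/d\rfloor$ is a proper $n$-colouring (any two vertices of the same colour lie in a block of $d$ consecutive indices, hence at circular distance less than $d$, hence non-adjacent), while $\{a_0,a_d,\ldots,a_{(n-1)d}\}$ is a clique of size $n$.

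For $k=0,1,\ldots,n-1$ let $C_k=\{a_{kd},a_{kd+1},\ldots,a_{(k+1)d-1}\}$, with indices read modulo $n$. The structural observation driving the construction is that every non-edge $\{a_i,a_j\}$ of $G^d_{nd}$ satisfies $\min(|i-j|,\,nd-|i-j|)<d$, so its endpoints occupy at most $d$ consecutive positions on the cycle; any such window meets at most two sectors. Thus every non-edge is either \emph{intra-sector} (both endpoints in one $C_p$) or \emph{inter-sector} (one endpoint in $C_{p-1}$ and the other in $C_p$ for some $p$), which matches the $n$ natural cuts between consecutive sectors; this suggests assigning one interval graph per cut.

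I would define $I_k$ by the following intervals on $\mathbb{R}$ (with a small fixed $\varepsilon\in(0,\tfrac12)$): assign $[i',d]$ to the vertex $a_{(k-1)d+i'}\in C_{k-1}$ for $i'=0,\ldots,d-1$; assign $[j'+\tfrac12,\,j'+\tfrac12+\varepsilon]$ to $a_{kd+j'}\in C_k$ for $j'=0,\ldots,d-1$; and assign $[0,d]$ to every vertex of any sector $C_l$ with $l\notin\{k-1,k\}$. A short calculation shows that $C_{k-1}$ is a clique in $I_k$ (all its intervals contain $d$), $C_k$ is independent (its short intervals are pairwise disjoint), the remaining vertices are universal, and $[i',d]\cap[j'+\tfrac12,\,j'+\tfrac12+\varepsilon]\neq\emptyset$ exactly when $i'\leq j'$; in the original indices, $i'\leq j'$ is equivalent to $j-i\geq d$, which is precisely the adjacency rule of $G^d_{nd}$ between $C_{k-1}$ and $C_k$. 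Hence $E(G^d_{nd})\subseteq E(I_k)$ for every $k$, and for the reverse inclusion each non-edge of $G^d_{nd}$ is witnessed by at least one $I_k$: an intra-$C_p$ non-edge is witnessed by $I_p$ (disjoint ``$v$''-intervals), while an inter-sector non-edge across the cut at $pd$ is witnessed by $I_p$ as a pair $u_{i'}v_{j'}$ with $j'<i'$; the wrap-around case $p=0$ is identical in circular coordinates.

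The step I expect to be the main obstacle is realising that the $n$ interval graphs should be indexed by the $n$ inter-sector cuts (not by the $n$ colour classes themselves), and that a single $I_k$ must simultaneously encode the intra-sector non-edges of $C_k$ and the inter-sector non-edges across the cut at $kd$ in one interval representation. The resulting bipartite staircase between $C_{k-1}$ (as a clique) and $C_k$ (as an independent set), surrounded by universal vertices, is an interval graph essentially by the explicit assignment above. Once this packaging is seen, every verification reduces to comparing a pair of integers, and Theorem~\ref{1a} delivers $box(G^d_{nd})\leq n=\chi(G^d_{nd})$.
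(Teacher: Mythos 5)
Your proposal is correct and takes essentially the same route as the paper: the paper's proof of the more general Theorem~\ref{TK1} (of which this statement is the case $k=nd$, Subcase~(ii)) likewise builds, for each colour class of $d$ consecutive vertices, an interval supergraph in which that class is independent (point intervals), the adjacent class carries exactly your staircase (non-adjacency precisely when the offset inequality $j<i$ fails to reverse), and all remaining vertices are universal, then intersects the $n$ interval graphs via Roberts' Theorem~\ref{1a}. Your ``indexing by cuts'' coincides with the paper's indexing by colour classes up to a mirror image of the interval representation, the only cosmetic difference being that the paper handles $n=2$, $b=0$ separately by noting $G^d_{2d}$ is a perfect matching and hence already an interval graph.
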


\begin{thm}[\cite{Aki}]\label{1.2b}
For a circulant graph $G^d_{nd+r}$ with $n \geq 2$, $d\geq 2$ and $1\leq r< d$, if $n\geq d-r-1$, the inequality $\chi(G^d_{nd+r})\geq box(G^d_{nd+r})$ holds.
\end{thm}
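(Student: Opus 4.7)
The plan is to apply Roberts' characterization (Theorem \ref{1a}) and produce $n+1$ interval graphs $I_0, I_1, \ldots, I_n$ on $V(G^d_{nd+r})$ with $E(G^d_{nd+r}) = \bigcap_{t=0}^{n} E(I_t)$. Since $1 \leq r < d$ forces $\chi(G^d_{nd+r}) = \lceil (nd+r)/d \rceil = n+1$, this yields $box(G^d_{nd+r}) \leq n+1 = \chi(G^d_{nd+r})$.

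The first step is to record the combinatorial structure: the non-edges of $G^d_{nd+r}$ are precisely the pairs $(a_i, a_j)$ at cyclic distance strictly less than $d$, which I call \emph{close} pairs. A natural proper $(n+1)$-coloring is given by the blocks $B_j = \{a_{jd}, \ldots, a_{jd+d-1}\}$ for $0 \leq j \leq n-1$ together with the irregular block $B_n = \{a_{nd}, \ldots, a_{nd+r-1}\}$ of size $r$; each $B_j$ is independent because all of its vertex pairs lie at cyclic distance less than $d$.

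The second step, the core of the construction, is to design the $I_t$'s. For each $t \in \{0, 1, \ldots, n\}$, I would linearize the cycle at the boundary between $B_{t-1}$ and $B_t$ (reading the vertices in the order $B_t, B_{t+1}, \ldots, B_{t-1}$) and assign to each $a_i$ a closed interval of length approximately $k - d$, with lengths adjusted near the irregular block $B_n$ so that (i) every pair at cyclic distance at least $d$ has overlapping intervals in $I_t$, and (ii) an explicit family of close pairs is realised as disjoint-interval pairs. The intersection $\bigcap_t E(I_t) = E(G^d_{nd+r})$ then follows from a coverage argument: every close pair, including those whose short cyclic arc passes through the short block $B_n$, is eliminated by at least one $I_t$.

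The hypothesis $n \geq d - r - 1$, equivalently $k = nd + r \geq (n+1)(d-1)$, enters exactly in the coverage step: it is the threshold at which the $n+1$ interval graphs, each destroying a controlled family of close pairs, collectively account for every close pair. The main obstacle is engineering each $I_t$'s intervals, especially near $B_n$. A naive construction in which every vertex receives a length-$(k-d)$ interval and $I_t$ is merely a cyclic shift of $I_0$ fails: a close pair whose short cyclic arc lies strictly between two consecutive shifts is destroyed by no $I_t$. The interval lengths must therefore be tuned so that each $I_t$ simultaneously destroys an entire family of close pairs aligned with block $B_t$, and showing that this tuning succeeds precisely under $n \geq d - r - 1$ is the technical heart of the argument.
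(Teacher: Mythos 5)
Your skeleton---Roberts' characterization (Theorem \ref{1a}) together with one interval graph per color class of the block coloring $B_0,\ldots,B_n$---is the right framework, and it is the same framework used both in \cite{Aki} and in this paper's Theorem \ref{TK1}. But as written the proposal has a genuine gap: the objects on which the whole argument rests are never produced. The intervals are specified only as ``length approximately $k-d$, with lengths adjusted near the irregular block,'' the explicit family of close pairs that each $I_t$ destroys is never written down, and the coverage verification---which you yourself flag as the technical heart---is asserted rather than performed. The difficulty you must resolve explicitly is this: after linearizing the cycle at the boundary of $B_t$, every \emph{edge} of $G^d_{nd+r}$ whose short arc crosses the cut (cyclic distance between $d$ and $k-d$) must remain an edge of $I_t$, which forces vertices near both ends of the line to carry intervals spanning essentially the entire line; those long intervals are precisely what threaten to turn close pairs near the cut back into edges. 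Until you exhibit the tuned representation and check both directions (all edges of $G^d_{nd+r}$ preserved in every $I_t$; every close pair, including those whose short arc passes through the short block $B_n$, destroyed by some $I_t$), the claim that the hypothesis $n\geq d-r-1$, i.e.\ $k\geq (n+1)(d-1)$, is ``exactly'' the coverage threshold is an unverified guess, not a theorem.

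It is also worth knowing how the present paper handles this statement: it does not prove Theorem \ref{1.2b} directly but obtains it as a special case of the unconditional Theorem \ref{TK1}, whose proof sidesteps your cut problem entirely. In Claim 1 there, the auxiliary graph $I_W$ is made \emph{complete} outside a window of $2d$ consecutive vertices containing $W$: the vertices of $W$ receive distinct degenerate point-intervals $\{d-i\}$, the next $d$ vertices receive the nested intervals $[d-i,\,d+1]$, and all remaining vertices receive $[1,\,d+1]$. Consequently $I_W$ destroys \emph{all} close pairs whose short arc starts in $W$ (a forward window of width $d$), and since the blocks tile the cycle, block-aligned rotations (justified by vertex-transitivity) cover every close pair with no arithmetic condition on $n$, $d$, $r$; only the case $n=2$ needs a separate construction (Claim 2), because for $k<3d$ the window wraps and cyclic distances can exceed $k-d$, breaking the simple representation. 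So even if your tuning can be made to work, it would at best reproduce the conditional result of \cite{Aki}; redesigning each $I_t$ so that it kills the entire width-$d$ family of close pairs anchored in $B_t$, as in Theorem \ref{TK1}, simultaneously closes your gap and deletes the hypothesis $n\geq d-r-1$.
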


We recall the definition of  $G$-generalized join of graphs (\cite{sch}). 
Let $G$ be a graph with $V(G)=\{u_1,u_2,\ldots, u_n\}$ and $H_1,H_2,\ldots, H_n$ be pairwise disjoint graphs. The \textit{$G$-generalized join} $G[H_1,H_2,\ldots, H_n]$ of $H_1,H_2,\ldots, H_n$ is the graph obtained by replacing each vertex $u_i$ of $G$ by $H_i$ and joining each vertex of $H_i$ to each vertex of $H_j$ if $u_i$ is adjacent to $u_j$ in $G$. Note that, if $G=K_2$, then the $G$-generalized join $K_2[H_1,H_2]$ of $H_1$ and $H_2$ is the join $H_1\vee H_2$ of $H_1$ and $H_2$.

In \cite{And2}, an equivalence relation $\mathtt{\sim}_G$ for any graph $G$ was introduced as follows: For any $x,y \in V(G)$, $x \mathtt{\sim}_G y$ if and only if $N(x) = N(y)$. Let $[x]$ denote the equivalence class which contains $x$, and $S=\{[x_1],[x_2],\ldots, [x_k]\}$ be the set of all equivalence classes of $\mathtt{\sim}_G$. Clearly, the set of vertices in a class $[x]$ is an independent subset of $G$. The \textit{reduced graph} of $G$ is a simple graph $G_r$ with vertex set $V(G_r)=S$ and two distinct vertices $[x]$ and $[y]$ are adjacent in $G_r$ if and only if $x$ and $y$ are adjacent in $G$. Note that, any graph $G$ is the $G_r$-generalized join of $\left\langle [x_1]\right\rangle,\ldots, \left\langle [x_k]\right\rangle$, that is, $G=G_r\big[\left\langle [x_1]\right\rangle,\ldots, \left\langle [x_k]\right\rangle\big]$.



Throughout this paper, $R$ denotes a non-zero finite commutative ring with unity 1. A non-zero element $x$ of $R$ is said to be a \textit{zero-divisor} if there exists a non-zero element $y$ of $R$ such that $xy$=0. An element $u$ of $R$ is \textit{unit} in $R$ if there exists $v$ in $R$ such that $uv$=1. 
The \textit{nilradical} of a ring $R$ is the set $J=\{x\in R : x^t=0,$ for some positive integer $t\}$. 
A ring $R$ is said to be \textit{reduced} if $J=\{0\}$. 
 For any ring $R$, in \cite{Beck}, Beck associated a simple graph with $R$ whose vertices are the elements of $R$ and any two distinct vertices $x$ and $y$ are adjacent if and only if $xy$=0 in $R$. 
It can be observed that for the graph associated with the ring, the vertex $0$ is adjacent with every other vertex. D.F. Anderson et. al. in \cite{And3} slightly modified the definition of the graph associated with a ring by considering the nonzero zero divisors as the vertices and any two distinct vertices $x$ and $y$ are adjacent if and only if $xy=0$ in $R$. They called this as the \textit{zero-divisor graph} of the ring and it is denoted by $\Gamma(R)$. The zero-divisor graph have been studied extensively by several authors. See for instance \cite{Akb}-\cite{And3}, \cite{Sri}, \cite{Hal}, \cite{Kavas}.


For $x,y\in R$, define $x\mathtt{\sim}_R y$ if and only if $Ann(x)=Ann(y)$. It is proved in \cite{mul} that the relation $\mathtt{\sim}_R$ is an equivalence relation on $R$. For $x\in R$, let $C_x=\{r\in R\ |\ x\mathtt{\sim}_Rr \}$ be the equivalence class of $x$. Let $R_E=\{C_{x_1},C_{x_2},\ldots, C_{x_k}\}$ be the set of equivalence classes of the relation $\mathtt{\sim}_R$ other than $C_0$ and $C_1$. The \textit{compressed zero-divisor graph} $\Gamma_E(R)$ (defined in \cite{spi}) is the simple graph with vertex set $R_E$
and two distinct vertices $C_x$ and $C_y$ are adjacent if and only if $xy$=0. The following result can be found in \cite{sel}.
\begin{thm}[\cite{sel}]\label{A}
If $R$ is a finite commutative ring with unity, then\\
(i)\ \ \ $\Gamma(R)\cong \Gamma_E(R)[\left\langle C_{x_1}\right\rangle,\left\langle C_{x_2}\right\rangle,\ldots, \left\langle C_{x_k}\right\rangle]$, \\
 (ii)\ \ $\left\langle C_{x_i}\right\rangle$ is complete if and only if $x_i^2=0$, and \\
(iii)\ $\left\langle C_{x_i}\right\rangle$ is totally
disconnected (that is, complement of complete graph) if and only if $x_i^2\neq 0$.
\end{thm}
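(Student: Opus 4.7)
The plan is to prove the three parts simultaneously by systematically translating adjacency in $\Gamma(R)$ into annihilator conditions, exploiting the defining property $Ann(y)=Ann(x_i)$ for every $y\in C_{x_i}$.

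First I would dispose of (ii) and (iii), which describe the intra-class structure $\left\langle C_{x_i}\right\rangle$. For any $y\in C_{x_i}$, the equality $Ann(y)=Ann(x_i)$ shows that $x_i^{2}=0$ iff $x_i\in Ann(x_i)=Ann(y)$ iff $x_iy=0$ iff $y^{2}=0$; so the property ``squares to zero'' is constant on each class. If $x_i^{2}=0$ and $y,z\in C_{x_i}$ are distinct, then $y\in Ann(y)=Ann(z)$ yields $yz=0$, so $\left\langle C_{x_i}\right\rangle$ is complete. Conversely, if $x_i^{2}\neq 0$, then $y^{2}\neq 0$, so $y\notin Ann(y)=Ann(z)$ and $yz\neq 0$ for all distinct $y,z\in C_{x_i}$; hence $\left\langle C_{x_i}\right\rangle$ is totally disconnected.

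For (i) I would verify the inter-class adjacency rule of the generalized join. Given $y\in C_{x_i}$ and $z\in C_{x_j}$ with $i\neq j$, the equalities $Ann(y)=Ann(x_i)$ and $Ann(z)=Ann(x_j)$ let me rewrite $yz=0$ as $z\in Ann(x_i)$, equivalently $x_iz=0$, equivalently $x_i\in Ann(x_j)$, equivalently $x_ix_j=0$. Therefore $y$ and $z$ are adjacent in $\Gamma(R)$ iff $C_{x_i}$ and $C_{x_j}$ are adjacent in $\Gamma_E(R)$, which is exactly the adjacency condition required for $\Gamma_E(R)\big[\left\langle C_{x_1}\right\rangle,\ldots,\left\langle C_{x_k}\right\rangle\big]$.

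The main housekeeping point, and the only place a genuine obstacle could hide, is the vertex-set match: one must verify that $V(\Gamma(R))$ is the disjoint union $\bigcup_{i=1}^{k}C_{x_i}$. Since $x\in C_0$ iff $Ann(x)=R$ iff $x=0$, and $x\in C_1$ iff $Ann(x)=\{0\}$ iff $x$ is not a zero-divisor, the excluded classes $C_0$ and $C_1$ together account precisely for $0$ and the non-zero-divisors of $R$, leaving exactly the nonzero zero-divisors as the remaining elements. With this alignment the adjacency equivalences above directly establish the isomorphism in (i), so the proof ultimately reduces to a careful bookkeeping exercise with annihilators.
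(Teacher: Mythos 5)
The paper contains no proof of this statement to compare against: Theorem \ref{A} is imported verbatim from \cite{sel} (a submitted manuscript) and is used as a black box, so your proposal can only be judged on its own merits. On those merits it is correct and is surely the intended argument: the chain $yz=0 \Leftrightarrow z\in Ann(y)=Ann(x_i) \Leftrightarrow x_iz=0 \Leftrightarrow x_i\in Ann(z)=Ann(x_j) \Leftrightarrow x_ix_j=0$ shows at once that adjacency between classes is independent of representatives (so $\Gamma_E(R)$ is well defined) and is all-or-nothing, which is exactly the generalized-join condition; and you rightly identified the one place where care is needed, namely that $C_0=\{0\}$ (since $Ann(x)=R$ forces $1\cdot x=0$, using the unity) and $C_1$ is precisely the set of non-zero-divisors, so that $\bigcup_{i=1}^k C_{x_i}$ is exactly $V(\Gamma(R))$. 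Your observation that ``$y^2=0$'' is constant on each class is the key point making (ii) and (iii) well posed. One pedantic caveat, which is a defect of the quoted statement rather than of your proof: when $|C_{x_i}|=1$ the graph $\left\langle C_{x_i}\right\rangle$ is simultaneously complete and totally disconnected, so the ``only if'' directions of (ii) and (iii) fail as literally stated (in $\mathbb{Z}_6$ the class $C_3=\{3\}$ has $3^2=3\neq 0$ yet induces the complete graph $K_1$); what you actually establish --- $x_i^2=0$ forces every intra-class pair adjacent, $x_i^2\neq 0$ forces none --- is the correct content and is all that the rest of the paper (Theorems \ref{4.1} and the reduced-ring bound) ever uses.
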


This paper is organized as follows. 
In the second section, we prove that $\chi(G^d_k)\geq box(G^d_k)$, for any $k\geq 2d$ and as a consequence we have Theorems \ref{1.1a} and  \ref{1.2b}. In fact, this was a question raised in \cite{Aki}. 

In the third section, we obtain 
$M \leq box\left(G[H_1,H_2,\ldots, H_n]\right)\leq \mathop\sum\limits_{i=1}^nbox(H_i),
$ if at least one of $H_i$'s is not complete and $M=\max \Big\{
\mathop\sum\limits_{j=1}^k box(H_{i_j})\ | \{u_{i_1},\ldots, u_{i_k}\} $ is a clique in $G$ and each $H_{i_j}$ is not a complete graph$\Big\}$. 
As a consequence of this result, we prove that if $G_r$ is the reduced graph of $G$, then $box(G)\leq |V(G_r)|$.

In the fourth section, we obtain a bound for the boxicity of zero-divisor graph of a ring. In particular, we prove that $\chi(\Gamma(R))\leq box(\Gamma(R))\leq 2^{\chi(\Gamma(R))}-2$ if $R$ is a non-zero reduced ring.

In the last section, for a composite number $N$, we prove that if $N= \prod\limits_{i=1}^{a}p_i^{2n_i} \prod\limits_{j=1}^{b}q_j^{2m_j+1}$, where $p_i$'s and $q_j$'s are distinct  prime numbers,   
then $\omega(\Gamma_E(\mathbb{Z}_N))=\mathop\prod\limits_{i=1}^{a}(n_i+1)\mathop\prod\limits_{j=1}^{b}(m_j+1)+b-1=\chi(\Gamma_E(\mathbb{Z}_N))$, where $\mathbb{Z}_n$ is a ring of integer modulo $N$ and 
$$box(\Gamma(\mathbb{Z}_N))\leq \left(\mathop\prod\limits_{i=1}^{a}(2n_i+1)\mathop\prod\limits_{j=1}^{b}(2m_j+2)\right)-\left(\mathop\prod\limits_{i=1}^{a}(n_i +1)\mathop\prod\limits_{j=1}^{b}(m_j+1)\right)-1.$$
Finally, we have shown that $box(\Gamma(\mathbb{Z}_N))=1$ if and only if either $N=p^n$ for some prime number $p$ and some positive integer $n\geq 2$ or $N=2p$  for some odd prime number $p$. 
\section{Boxicity of Circular Clique $G^d_k$}
We recall the following result from \cite{Vince, Xu}.
\begin{thm}[\cite{Vince, Xu}]\label{1.1c}
If $k$ and $d$ are any two positive integers with $k\geq 2d$, then $\chi(G^d_k)=\left\lceil \frac{k}{d}\right\rceil$.
\end{thm}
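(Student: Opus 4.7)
The plan is to bound $\chi(G^d_k)$ from above and from below separately by elementary arguments: an explicit coloring for the upper bound, and a counting argument based on the independence number for the lower bound.

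First I would prove $\chi(G^d_k)\leq\lceil k/d\rceil$. Let $N=\lceil k/d\rceil$ and define $c(a_i)=\lfloor i/d\rfloor+1$ for $0\leq i\leq k-1$, so that $c$ uses exactly the colors $1,2,\ldots,N$. Whenever $c(a_i)=c(a_j)$ we have $\lfloor i/d\rfloor=\lfloor j/d\rfloor$ and hence $|i-j|\leq d-1$; but adjacency in $G^d_k$ demands $d\leq|i-j|\leq k-d$, so $a_i$ and $a_j$ are non-adjacent and the coloring is proper.

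Next I would prove the matching lower bound $\chi(G^d_k)\geq\lceil k/d\rceil$ by showing that $\alpha(G^d_k)=d$ and then observing that each color class of a proper coloring is an independent set, so any proper coloring must use at least $|V(G^d_k)|/\alpha(G^d_k)=k/d$ colors, giving $\chi(G^d_k)\geq\lceil k/d\rceil$ after taking ceilings. A set $S\subseteq V(G^d_k)$ is independent iff the circular distance between any two of its members is strictly less than $d$, i.e., iff the members of $S$ all lie inside some arc of length $d-1$ on the underlying $k$-cycle. Such an arc meets at most $d$ vertex indices, so $\alpha(G^d_k)\leq d$, and the set $\{a_0,a_1,\ldots,a_{d-1}\}$ realizes this bound since any two of its elements differ by at most $d-1$.

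The one subtlety worth flagging is that when $d\nmid k$ the clique number satisfies $\omega(G^d_k)=\lfloor k/d\rfloor=\lceil k/d\rceil-1$, so no clique can certify the lower bound; that is precisely why the argument above routes through the independence number rather than through $\omega$, and it is this gap between $\omega$ and $\chi$ that makes the ceiling in $\lceil k/d\rceil$ tight.
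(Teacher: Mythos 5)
Your upper bound is correct and complete: the block coloring $c(a_i)=\lfloor i/d\rfloor+1$ uses exactly $\lceil k/d\rceil$ colors, and two vertices in the same block satisfy $|i-j|\le d-1<d$, so they cannot be adjacent since adjacency in $G^d_k$ requires $d\le|i-j|\le k-d$. (Note for context: the paper does not prove this theorem at all --- it is imported from Vince and Zhu --- so there is no in-paper argument to compare against; a correct self-contained proof would stand on its own.)

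The lower bound, however, contains a genuine gap. You assert that $S$ is independent if and only if its members ``all lie inside some arc of length $d-1$.'' The first characterization (pairwise circular distance at most $d-1$) is correct, but the rephrasing is false: pairwise closeness on a circle does not imply containment in a common short arc --- the relation ``within circular distance $d-1$'' fails the Helly property precisely in the range $2d\le k\le 3(d-1)$. Concretely, in $G^4_9$ the set $\{a_0,a_3,a_6\}$ is independent (all pairwise index differences equal $3$ or $6$, while adjacency requires $4\le|i-j|\le 5$), yet no arc of four consecutive vertices of $\mathbb{Z}_9$ contains it. Hence your stated proof of $\alpha(G^d_k)\le d$ (``such an arc meets at most $d$ vertex indices'') does not go through, and this is the step your entire lower bound rests on. The conclusion $\alpha(G^d_k)=d$ is nevertheless true, and your strategy ($\chi\ge |V|/\alpha=k/d$, then take ceilings since $\chi$ is an integer) is sound once this is repaired. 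A short fix: let $S$ be independent and, using vertex-transitivity, assume $a_0\in S$. Every other element of $S$ lies at circular distance at most $d-1$ from $a_0$, so it is $a_p$ with $1\le p\le d-1$ or $a_{k-q}$ with $1\le q\le d-1$. Let $P\subseteq\{0,1,\ldots,d-1\}$ collect the offsets of the first kind together with $0$, and $Q\subseteq\{1,\ldots,d-1\}$ those of the second kind. If $p+q=d$ for some $p\in P$ and $q\in Q$, then $|p-(k-q)|=k-(p+q)=k-d$, and since $k\ge 2d$ we have $d\le k-d\le k-d$, so $a_p$ and $a_{k-q}$ would be adjacent, contradicting independence. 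Therefore $P$ and $\{d-q : q\in Q\}$ are disjoint subsets of $\{0,1,\ldots,d-1\}$, whence $|S|=|P|+|Q|\le d$. Your closing remark that $\omega(G^d_k)=\lfloor k/d\rfloor$ blocks a clique-based certificate when $d\nmid k$ is correct and shows you chose the right invariant; the flaw is only in how you bounded it.
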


Using Theorems \ref{1a} and \ref{1.1c}, we prove Theorem \ref{TK1}.
\begin{thm}\label{TK1}
If $k$ and $d$ are any two positive integers with $k\geq 2d$, then $\chi(G^d_k)\geq box(G^d_k)$.
\end{thm}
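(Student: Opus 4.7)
The plan is to invoke Theorem~\ref{1a}: it suffices to exhibit $n := \lceil k/d \rceil$ interval graphs $I_0, I_1, \ldots, I_{n-1}$ on the common vertex set $\{a_0, a_1, \ldots, a_{k-1}\}$ such that $E(G^d_k) = \bigcap_{r=0}^{n-1} E(I_r)$; the equality $\chi(G^d_k) = n$ is supplied by Theorem~\ref{1.1c}.

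The color classes of the natural proper $n$-coloring are the cyclic arcs $A_r = \{a_{rd}, a_{rd+1}, \ldots, a_{\min((r+1)d,\, k)-1}\}$ for $r = 0, 1, \ldots, n-1$. Each $A_r$ is independent in $G^d_k$, and for every $v \notin A_r$ the set of $G^d_k$-neighbors of $v$ inside $A_r$ is a contiguous subset of $A_r$ in its natural linear order $a_{rd}, a_{rd+1}, \ldots$ -- a prefix if $v$ lies ``just after'' $A_r$ on the cycle, a suffix if $v$ lies ``just before'' $A_r$, and all of $A_r$ if $v$ is ``far'' from $A_r$. For each $r$ I would construct $I_r$ by first placing the $|A_r|$ vertices of $A_r$ at widely spaced points on the real line in that natural order, and then assigning to every vertex $v \notin A_r$ an interval whose intersection with those $|A_r|$ points is exactly the set of $G^d_k$-neighbors of $v$ in $A_r$, extending the interval sideways into the gaps between successive $A_r$-points so that every $G^d_k$-edge between two vertices outside $A_r$ is also preserved as an edge of $I_r$. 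By construction the non-edges of $I_r$ form a prescribed family of non-edges of $G^d_k$ that are incident to $A_r$; since every non-edge of $G^d_k$ is incident to at least one $A_r$, the union of these families over $r$ contains every non-edge of $G^d_k$, and hence $\bigcap_r E(I_r) = E(G^d_k)$.

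The main technical obstacle is handling two outside vertices $u, w$ of $A_r$ whose neighborhoods in $A_r$ meet the arc at opposite ends (one giving a prefix, the other a suffix of neighbors): naively their intervals lie on opposite sides of the $A_r$-block and fail to overlap, while $\{u, w\}$ is typically an edge of $G^d_k$ that $I_r$ must preserve. This is overcome by spacing the points of $A_r$ far enough apart and then allowing each such ``prefix-type'' and ``suffix-type'' outside interval to extend deeply into the wide gaps between those points, so that the two intervals meet in the middle while still avoiding the $A_r$-points they are not supposed to cover; the boundary case $k = 2d$ (where $G^d_k$ is a perfect matching) is handled trivially. Once the construction is verified for every $r \in \{0, \ldots, n-1\}$, the theorems of \cite{Aki} (Theorems~\ref{1.1a} and~\ref{1.2b}) are recovered as the special cases $k = nd$ and $k = nd + r$ with $n \geq d - r - 1$, thereby answering the question raised there.
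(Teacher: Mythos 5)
Your high-level frame (Roberts' Theorem~\ref{1a} plus one interval graph per color class $A_r$, with $\chi(G^d_k)=\lceil k/d\rceil$ from Theorem~\ref{1.1c}) is exactly the paper's, but the key technical step fails, and you have put your finger on precisely the point where it fails without resolving it. You require each $I_r$ to kill \emph{all} non-edges of $G^d_k$ incident to $A_r$, i.e.\ every outside vertex $v$ must get an interval meeting exactly its $G^d_k$-neighbors in $A_r$, and you propose to handle the prefix-type/suffix-type conflict by wide spacing and extending intervals ``deeply into the gaps.'' No spacing can work: the constraints are ordinal, not metric. Since the $A_r$-intervals are pairwise disjoint, and since the families of prefix sets $\{a_{rd},\dots,a_{rd+s}\}$ and suffix sets $\{a_{rd+t},\dots\}$ must all be consecutive in the left-to-right order of those disjoint intervals, that order is forced to be the natural one (or its reverse). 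Then a prefix-type vertex $u$ whose only $A_r$-neighbor is the first vertex of the arc must have its interval end strictly before the second $A_r$-interval, while a suffix-type vertex $w$ missing the first two arc vertices must begin strictly after it, so $f(u)\cap f(w)=\emptyset$; yet such $u,w$ are typically adjacent in $G^d_k$. Concretely, in $G^3_9$ with $A_0=\{a_0,a_1,a_2\}$: $N(a_3)\cap A_0=\{a_0\}$, $N(a_8)\cap A_0=\{a_2\}$, $a_1$ is adjacent to neither, and $a_3a_8\in E(G^3_9)$ since $3\le 8-3\le 6$. So no interval supergraph of $G^3_9$ whatsoever kills all non-edges incident to $A_0$ --- the obstruction defeats not just your construction but your specification of what $I_r$ should do.

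The paper's proof escapes this by making each class's interval graph \emph{one-sided}: in Claim 1, $I_W$ eliminates only the non-edges going cyclically forward from $W$ within distance $d-1$ (the staggered intervals $f(a_{d+i})=[d-i,\,d+1]$ and the point $\{d+1\}$ for $a_r,\dots,a_{d-1}$), while every vertex behind $W$ receives the full interval $[1,\,d+1]$ covering all of $W$'s points, so the backward non-edges at $W$ are deliberately \emph{not} killed in $I_W$; they are killed instead in the interval graph of the preceding class (the paper's remark that a non-edge between $W_i$ and $W_{i+1}$ dies ``in $I_{W_i}$ or in $I_{W_{i+1}}$,'' with vertex-transitivity supplying the rotated copies). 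This one-sidedness is exactly what dissolves your prefix/suffix conflict, and it is the missing idea. A second, lesser gap: your boundary remark covers only $k=2d$, but the regime $2d<k<3d$ (the paper's Case 2, $m=2$) needs a genuinely different and more delicate construction (Claim 2, with the blocks $S_i$ and chained intervals), because there the forward distance-$d$ zone of a class wraps around; your uniform construction does not address it.
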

\begin{proof}

Let $k=md+b$, where $0\leq b\leq d-1$.

\noindent \textbf{Case 1.} $m\geq 3$.

Let $r=d$ or $b$ and let $W=\{a_j\ |\ 0\leq j\leq r-1\}$. 
Then $W$ is an independent subset of $G^d_k$. We now claim the following for $W$.\\
\textbf{Claim 1.} There is an interval graph $I_W$ such that $G^d_k$ is a spanning subgraph of $I_W$ and $W$ is an independent subset of $I_W$. \\
\textit{Proof of claim 1.}

Let $H$ be the complete graph with the vertex set $$V(H)=\{a_j\ |\ r\leq j\leq d-1\}\cup \{a_{d+j}\ |\ r\leq j\leq k-1-d\}$$ and $S$ be the complete graph with the vertex set $V(S)=\{a_{d+j}\ |\ 0\leq j\leq r-1\}$. 
Define $I_W$ to be the graph with the vertex set $V(I_W)=V(G^d_k)$ and the edge set  

$E(I_W)=E(G^d_k)\cup E(H)\cup E(S)\cup \{a_ia_{d+j}\ |\ 0\leq i\leq r-1, r\leq j\leq k-d-1\}$

\ \ \ \  \ \ \ \ \ \ \ \ \ \ \  $\cup\{a_ia_{d+j}|\ r\leq i\leq d-1$, or $d+r\leq i\leq k-1 \text{ and } 0\leq j\leq r-1\}.$ \\
Then clearly $G^d_k$ is a spanning subgraph of $I_W$, $W$ is an independent subset of $I_W$ and there is no edge between $W$ and $\{a_j: r\leq j\leq d-1\}$ in $I_W$.

We shall now show that $I_W$ is our required interval graph. The following is an interval representation $f$ of $I_W$.\\
\noindent (i)\ For $0\leq i\leq r-1$, 
$$f(a_i)=\{d-i\},$$ and $$f(a_{d+i})=[d-i,\ d+1],$$
\noindent (ii)\ For $r\leq i\leq d-1$, $$f(a_i)=\{d+1\},$$ 
(iii) For $d+r\leq i\leq k-1$,
$$f(a_i)=[1,\ d+1].$$


Note that for $1\leq i\leq r-1$, the number $d-i$ does not belong to the interval $[d-j,d+1]$, for $\ 0\leq j\leq i-1$. 

Let $x,y\in V(I_W)$, say $x=a_i$ and $y=a_j$ where $i<j$. First, if $xy\notin E(I_W)$, then either 
$x,y\in W$, or $x\in W$ and $y\in \{a_r,a_{r+1},\ldots, a_{d-1}\}$ or $x\in W$ and $y\in S$, where $j\in \{d+1,d+2,\ldots, d+i-1\}$ (as $m\geq 3$). We can observe in all the case that $f(x)\cap f(y)=\emptyset$.
Next, if $xy\in E(I_W)$, then either $x,y\in S\cup V(H)$ or $x\in W$ and 
$y\in S\cup V(H)$. 
In both the cases $f(x)\cap f(y)\neq \emptyset$. 
\noindent Thus $I_W$ is an interval graph with the desired property. 


\noindent \textbf{Subcase (i)} $b\geq 1$. 

By Theorem \ref{1.1c}, $\chi(G^d_k)= m+1$. Let $V(G^d_k)=\mathop\bigcup\limits_{i=0}^m W_i$, where $W_i=\{a_{id+j}\ |\ 0\leq j\leq d-1\}$, 
for $0\leq i\leq m-1$ and $W_m=\{a_{md+j}\ |\ 0\leq j\leq b-1\}$. 
Clearly, $W_i\cap W_j=\emptyset$, for $0\leq i, j\leq m$ with $i\neq j$ and $W_i$ is an independent subset of $G^d_k$, for $0\leq i\leq m$. Since $G_k^d$ is a vertex-transitive graph, for each $0\leq i\leq m$, there is an interval graph $I_{W_i}$ for $W_i$ such that $G^d_k$ is a spanning subgraph of $I_{W_i}$ and $W_i$ is an independent subset of $I_{W_i}$, by Claim 1. 

By Theorem \ref{1a}, it is enough to prove $E(G^d_k)=\mathop\bigcap\limits_{i=0}^{m} E(I_{W_i})$. Let $x, y\in V(G^d_k)$. If they are adjacent in $G^d_k$, then they are adjacent in each $I_{W_i}$, for $0\leq i\leq m$, because $G^d_k$ is a spanning subgraph of $I_{W_i}$. It remains to show that if $x$ and $y$ are not adjacent in $G^d_k$, then they are not adjacent in at least one of  $I_{W_i}$'$s$. Suppose if $x,y\in W_i$, for some $i$, then they are not adjacent in $I_{W_i}$. Therefore, by the definition of $G^d_k$, $x\in W_i$ and $y\in W_{i+1}$, for some $i$ (where the addition in subscript is taken modulo over $m$), then they are not adjacent in $I_{W_i}$ or in $I_{W_{i+1}}$. 
Hence $\chi(G^d_k)\geq box(G^d_k)$.

\noindent \textbf{Subcase (ii)} $b=0$.

By Theorem \ref{1.1c}, $\chi(G^d_k)= m$. Let $V(G^d_k)=\mathop\bigcup\limits_{i=0}^{m-1} W_i$, where $W_i=\{a_{id+j}\ |\ 0\leq j\leq d-1\}$, for $0\leq i\leq m-1$. By a similar strategy as in Subcase (i), we can prove that $\chi(G^d_k)\geq box(G^d_k)$.

\noindent \textbf{Case 2.} $m=2$.

Then $k=2d+b$, where $0\leq b\leq d-1$. Suppose if $b=0$, then $G^d_{2d}$ is an $1$-regular graph (that is, the degree of every vertex of $G^d_{2d}$ is one) and hence it is an interval graph. Thus, $box(G^d_{2d})=1<2=\chi(G^d_{2d})$. So, let us assume that $b\geq 1$. 

First if $U=\{a_j\ |\ 0\leq j\leq b-1\}$, 
then by the similar way as given in Claim 1, we can construct an interval graph $I_U$ for $U$ such that $G^d_k$ is a spanning subgraph of $I_U$ and $U$ is an independent subset of $I_U$. Next if $W=\{a_j\ |\ 0\leq j\leq d-1\}$, then we claim the following for $W$.

\noindent \textbf{Claim 2.} There is an interval graph $I_W$ such that $G^d_k$ is a spanning subgraph of $I_W$ and $W$ is an independent subset of $I_W$. \\
\textit{Proof of claim 2.} \\
Since $2\leq b+1\leq d$, there exist non-negative integers $c,e$ such that $d=c(b+1)+e$ where $0\leq e\leq b$. Let $S=\{a_{d+j}\ |\ 0\leq j\leq d-1\}$. 
We write $S=\mathop\bigcup\limits_{i=0}^{c}S_i$, where $S_i=\{a_{d+i(b+1)+j}\ |\ 0\leq j\leq b \}$ for $0\leq i\leq c-1$ and $S_c=\{a_{d+c(b+1)+j}\ |\ 0\leq j\leq e-1\}$.

Let $H$ be the complete graph with vertex set $V(H)=\{a_{2d+j}\ |\ 0\leq j\leq b-1\}$ 
and $S'$ be a graph with vertex set $V(S')=S$ 
and edge set $E(S')=\mathop\bigcup\limits_{i=0}^{c}E_i$, where 
\begin{eqnarray*}
E_i
& = & \{a_{d+i(b+1)+j}\ a_{d+i(b+1)+\ell}\ |\ 0\leq j<\ell \leq b\} \\
&  & \cup\{a_{d+(i+1)(b+1)+j}\ a_{d+i(b+1)+\ell}\ |\ 0\leq j\leq b,\ j\leq \ell\leq b\},
\end{eqnarray*}
for $0\leq i\leq c-2$,
\begin{eqnarray*}
E_{c-1}
& = & \{a_{d+(c-1)(b+1)+j}\ a_{d+(c-1)(b+1)+\ell}\ |\ 0\leq j<\ell\leq b\} \\
&  & \cup\{a_{d+c(b+1)+j}\ a_{d+(c-1)(b+1)+\ell}\ |\ 0\leq j\leq e-1,\ j\leq \ell\leq b\}.
\end{eqnarray*}
and $E_c=\{a_{d+c(b+1)+j}\ a_{d+c(b+1)+\ell}\ |\ 0\leq j< \ell \leq e-1\}.$

Define $I_W$ be the graph with $V(I_W)=V(G^d_k)$ and $$E(I_W)= E(G^d_k)\cup E(H)\cup E(S')\cup \{a_ia_j|\ 0\leq i\leq 2d-1, 2d\leq j\leq k-1\},$$ that is, 
$E(I_W)=E(G^d_k)\cup E(\langle W \vee H\rangle)\cup E(\langle S' \vee H\rangle).$ Then clearly $G^d_k$ is a spanning subgraph of $I_W$ and $W$ is an independent subset of $I_W$.

Now we prove that $I_W$ is an interval graph. The following is an interval representation $f$ of $I_W$. 

\noindent \ For $0\leq i\leq d-1$ (that is, $a_i\in W$),
\begin{eqnarray}
f(a_i)=\{\alpha_i\}, \text{where } \alpha_i\in (i-1,i),
\end{eqnarray}
\noindent \ For $0\leq j\leq b$ (that is, $a_{d+j}\in S_0$),
\begin{eqnarray}
f(a_{d+j})=[-1,\ j],
\end{eqnarray}
\noindent for $1\leq i\leq c-1$ and $0\leq j\leq b$ (that is, $a_{d+i(b+1)+j}\in S_i$)
\begin{eqnarray}
f(a_{d+i(b+1)+j})=[(i-1)(b+1)+j,\ i(b+1)+j],
\end{eqnarray}
and for $0\leq j\leq e-1$ (that is, $a_{d+c(b+1)+j}\in S_c$),
\begin{eqnarray}
f(a_{d+c(b+1)+j})=[(c-1)(b+1)+j,\ c(b+1)+j],
\end{eqnarray}
\noindent \ For $2d\leq i\leq k-1$, (that is, $a_i\in V(H)$)
\begin{eqnarray}
f(a_i)=[-1,\ d].  
\end{eqnarray}
Clearly, $f(x)\subseteq [-1,d]$, for every $x\in V(I_W)$.

Let $x,y\in V(I_W)$. First if $xy\notin E(I_W)$, then $x,y\in W\cup S$. More precisely, one of the following will happen. Let us establish that $f(x)\cap f(y)=\emptyset$ in all the possibilities. 


\begin{itemize}
\item $x,y\in W$. Then there exist $0\leq i<j\leq d-1$ such that $x=a_i$ and $y=a_j$. Hence $f(x)\cap f(y)=\{\alpha_i\}\cap \{\alpha_j\}=\emptyset$ (by equations (2)).  
\item $x\in W$ and $y\in S$ or $y\in W$ and $x\in S$. Without loss of generality, let $x\in W$ and $y\in S$. Then $x=a_{i}$ for some $i\in \{0,1,\ldots, d-1\}$ and $y=a_{d+j}$, for some $j\in \{0,1,\ldots, i-1\}$ or $j\in \{b+i+1,\ldots, d-1\}$.
Clearly, $f(x)=\{\alpha_i\}\subseteq (i-1, i)$ (by equation (2)). 
When $0\leq j \leq i-1$, $f(y)=f(a_{d+j})\subseteq [-1,j]\subseteq [-1,i-1]$ (by equations (2), (3) and (4)) 
and when $b+i+1\leq j\leq d-1$, $f(y)=f(a_{d+j})=[j-b-1,j]\subseteq [i,d-1]$ (by equations (3) and (4)). In both the cases $f(x)\cap f(y)=\emptyset$. 
\item $x,y\in S$. Then $x\in S_i$ and $y\in S_j$ for some $0\leq i,j\leq c$. Since $S_i$'s are complete, $i\neq j$, say $i<j$. 

First, let us consider $j\geq i+2$. 
Clearly, $f(x)\subseteq [-1,i(b+1)+b]$ and $f(y)\subseteq [(j-1)(b+1),j(b+1)+b]$ and hence $f(x)\cap f(y)=\emptyset$. 

Next, let us consider $j=i+1$. 
When $0\leq i\leq c-2$,  $x=a_{d+i(b+1)+\ell}$ for some $0\leq \ell < b$ and $y=a_{d+(i+1)(b+1)+\ell+p}$ for some $1\leq p\leq b-\ell$. 
One can see that $f(x)\subseteq [-1,\ i(b+1)+\ell]$ and $f(y)= [i(b+1)+\ell+p,\ (i+1)(b+1)+\ell+p]$ and thus $f(x)\cap f(y)=\emptyset$, as $p\geq 1$. Similarly when $i=c-1$, we can show that  
$f(x)\cap f(y)=\emptyset$.
\end{itemize}

 Next if $xy\in E(I_W)$, then one of the following will happen.
\begin{itemize}
\item $x\in V(H)$ and $y \in V(I_W)$ or vice versa. 
\item $x,y \in S_i$, for some $i\in \{0,1,\ldots, c\}$
\item $x\in S_i$ and $y\in S_{i+1}$, for some $i\in \{0,1,\ldots, c\}$ or vice versa.
\item $x\in W$ and $y\in S$ or vice versa.






\end{itemize}	
By using the definition of $f$, without much difficulty, one can see that $f(x)\cap f(y)\neq \emptyset$.
\noindent Thus, $I_W$ is an interval graph. Hence the Claim 2.\\
By Theorem \ref{1.1c}, $\chi(G^d_k)= 3$. Let $V(G^d_k)=\mathop\bigcup\limits_{i=0}^{2} W_i$, where $W_i=\{a_{id+j}\ |\ 0\leq j\leq d-1\}$, for $0\leq i\leq 1$ and $W_2=\{a_{2d+j}\ |\ 0\leq j\leq b-1\}$. 
Here also similar to Case 1, we can prove that $\chi(G^d_k)=3\geq box(G^d_k)$.
\end{proof}
\section{Bounds for Boxicity of $G$-Generalized Join of Graphs}
In this section we first obtain a bounds for boxicity of $G$-generalized join of graphs.
\begin{thm}\label{3}
Let $G$ be a graph with vertex set $V(G)=\{u_1,\ldots, u_n\}$ and $H_1,\ldots, H_n$ be $n$-pairwise disjoint graphs such that at least one of $H_i$'s is not complete. Then 
$M\leq box(G[H_1,H_2,\ldots, H_n])\leq \mathop\sum\limits_{i=1}^n box(H_i)$, where
$M=\max \Big\{
\mathop\sum\limits_{j=1}^k box(H_{i_j})\ | \{u_{i_1},\ldots, u_{i_k}\} $ is a clique in $G$ and each $H_{i_j}$ is not a complete graph$\Big\}$.
\end{thm}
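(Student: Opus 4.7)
The plan is to prove the two inequalities separately. For the upper bound, I would construct an explicit collection of $\sum_{i=1}^{n}b_i$ interval graphs (with $b_i=box(H_i)$) on $V(G[H_1,\ldots,H_n])$ whose intersection is $G[H_1,\ldots,H_n]$, and invoke Theorem \ref{1a}. Fix for each $i$ a representation $H_i=I_i^{1}\cap\cdots\cap I_i^{b_i}$, and by translation choose $M_i>0$ so that every interval appearing in any $I_i^s$ lies in $[0,M_i]$. For every pair $(i,s)$, define $J_i^s$ on the full vertex set by keeping the interval representation of $I_i^s$ on $V(H_i)$, and assigning every vertex of $V(H_j)$ (with $j\ne i$) the interval $[0,M_i+2]$ if $u_iu_j\in E(G)$ and the interval $[M_i+1,M_i+2]$ otherwise; each $J_i^s$ is an interval graph by construction. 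A short case analysis then gives $\bigcap_{i,s}J_i^s=G[H_1,\ldots,H_n]$: edges inside $V(H_i)$ are controlled only by the $J_i^s$, because for $i'\ne i$ all vertices of $V(H_i)$ share a common interval in $J_{i'}^{s'}$ and so form a clique there; edges between $V(H_i)$ and $V(H_j)$ are controlled by the $J_i^s$ and $J_j^{s'}$, which agree with the adjacency of $u_iu_j$ in $G$; and no $J_k^{s''}$ with $k\notin\{i,j\}$ contributes a missing cross-edge, since the off-window intervals $[0,M_k+2]$ and $[M_k+1,M_k+2]$ always share the point $M_k+1$.

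For the lower bound, I would fix a clique $\{u_{i_1},\ldots,u_{i_k}\}$ of $G$ with each $H_{i_j}$ non-complete that attains $M$. The induced subgraph of $G[H_1,\ldots,H_n]$ on $V(H_{i_1})\cup\cdots\cup V(H_{i_k})$ is precisely the iterated join $H_{i_1}\vee\cdots\vee H_{i_k}$, and boxicity does not increase under induced subgraphs (just restrict a box representation). So the task reduces to proving the join formula $box(F_1\vee F_2)=box(F_1)+box(F_2)$ for non-complete graphs $F_1,F_2$; iterating on $k$ then yields $box(H_{i_1}\vee\cdots\vee H_{i_k})=\sum_{j}box(H_{i_j})=M$ and hence the lower bound.

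The ``$\le$'' half of the join formula is exactly the upper-bound construction applied with $G=K_2$. For ``$\ge$'', I would write $F_1\vee F_2=\bigcap_{s=1}^{b}I_s$ with $b=box(F_1\vee F_2)$ and interval representations $f_s$, and let $J_s^t$ denote $I_s$ restricted to $V(F_t)$. The key gap-covering observation is: if $J_s^2$ is not complete, say $w_1,w_2\in V(F_2)$ are non-adjacent in it with $f_s(w_1)$ strictly left of $f_s(w_2)$, then each $u\in V(F_1)$ is adjacent to both $w_1$ and $w_2$ in $I_s$, so $f_s(u)$ must stretch across the gap between $f_s(w_1)$ and $f_s(w_2)$; hence any two intervals assigned to $V(F_1)$ overlap in that gap and $J_s^1$ is complete on $V(F_1)$. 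Partitioning $\{1,\ldots,b\}$ as $S_1=\{s:J_s^2\text{ is complete}\}$ and $S_2$ its complement, I obtain $F_1=\bigcap_{s\in S_1}J_s^1$ and $F_2=\bigcap_{s\in S_2}J_s^2$, yielding $box(F_1)+box(F_2)\le|S_1|+|S_2|=b$.

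The main obstacle is the ``$\ge$'' direction of the join formula, specifically the gap-covering argument that forces the complementary completeness of $J_s^1$ and $J_s^2$ and thereby delivers the clean partition of the $b$ interval-graph indices into ``$F_1$-indices'' and ``$F_2$-indices''. The upper-bound construction, by comparison, is largely bookkeeping once the off-window segment $[M_i+1,M_i+2]$ is chosen so that any two vertices outside $V(H_i)$ continue to share the point $M_i+1$.
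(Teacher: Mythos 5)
Your proposal is correct and follows essentially the same route as the paper: the upper bound via the family $J_i^s$ in which every $H_j$ with $j\neq i$ is replaced by a clique on a common interval meeting the window of $I_i^s$ exactly when $u_iu_j\in E(G)$ (this is precisely the paper's $J_i^j=R_i[S_1,\ldots,I_i^j,\ldots,S_n]$ with the intervals $[x,y]$ versus $[y,z]$ playing the role of your $[0,M_i+2]$ versus $[M_i+1,M_i+2]$), and the lower bound via induced-subgraph monotonicity together with superadditivity of boxicity under joins of non-complete graphs, iterated over a clique attaining $M$. The only difference is that the paper simply asserts the inequality $box(F_1\vee F_2)\geq box(F_1)+box(F_2)$ as ``easy to note,'' whereas your gap-covering argument (one side of each non-edge forces the other side to be a clique in that interval graph, yielding the partition $S_1\cup S_2$ of the factors) supplies a full and correct proof of that step.
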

\begin{proof}  It is easy to note that 
\begin{enumerate}
\item[(i)] if $G$ and $H$ are not complete graphs, then $box(G\vee H)\geq box(G)+box(H)$; 
\item[(ii)] if $G'$ is an induced subgraph of $G$, then $box(G)\geq box(G')$. 
\end{enumerate}
If $\langle\{u_{i_1},\ldots, u_{i_k}\}\rangle$ is a clique in $G$, $H_{i_j}$ is not a complete graph, for $1\leq j\leq k$, then $G[H_{i_1},H_{i_2},\ldots, H_{i_k}]=H_{i_1}\vee H_{i_2}\vee \ldots \vee H_{i_k}$ is an induced subgraph of $G[H_1,H_2,\ldots, H_n]$. Thus by (i) and (ii), $$box(G[H_1,H_2,\ldots, H_n])\geq box(G[H_{i_1},H_{i_2},\ldots, H_{i_k}])\geq \mathop\sum\limits_{j=1}^kbox(H_{i_j}),$$ by induction on $k$ and hence 
the lower bound follows. 

Next, we prove the upper bound.  
For $1\leq i\leq n$, let $box(H_i)=k_i$. Then, by Theorem \ref{1a}, there exist  interval graphs $I_i^1,\ldots, I_i^{k_i}$ with vertex set $V(H_i)$ such that $H_i=\mathop\bigcap\limits_{j=1}^{k_i} I_i^j$. 
We now define an interval graph $J_i^j$ such that $G[H_1,H_2,\ldots, H_n]$ is a spanning subgraph of $J_i^j$, for all $1\leq i\leq n$ and $1\leq j\leq k_i$. 

For $1\leq i\leq n$, denote by $N_G(u_i)$ the set 
of all vertices $u_{i_1},u_{i_2},\ldots u_{i_t}$ of $G$ which are adjacent to $u_i$ in $G$. 
Let $R_i$ be the graph with vertex set $V(G)$ and edge set $$E(R_i)=\{xu_i \ | \ x\in N_G(u_i)\}\cup \{xy \ | \ x\neq u_i \mbox{ and }  y\neq u_i \}.$$ Then $G$ is a spanning subgraph of every $R_i$. For every $1\leq \ell \leq n$ with $\ell \neq i$, let $S_{\ell}$ be the complete graph with vertex set $V(H_{\ell})$. Define $J_i^j=R_i[S_1,\ldots,S_{i-1}, I_i^j, S_{i+1},\ldots, S_n]$, for $1\leq i\leq n$ and $1\leq j\leq k_i$. Clearly $G[H_1,H_2,\ldots, H_n]$ is a spanning subgraph of $J_i^j$. 

\noindent \textbf{Claim 1.} $J_i^j$ is an interval graph, for $1\leq i\leq n$ and $1\leq j\leq k_i$. 

\noindent\textit{Proof of Claim 1.}
Let $c_i^j$ be an interval representation of $I_i^j$. Then there exist real numbers $x$ and $y$ such that $\mathop\bigcup\limits_{v\in V(I_i^j)}c_i^j(v)$ is contained in $[x,y)$. 
We construct an interval representation $f_i^j$ of $J_i^j$ as follows. 
$$
f_i^j(v) = \begin{cases} 
[x,y] & \mbox{if $v\in S_{i_\gamma}$, $1\leq \gamma \leq t$}\\

[y,z] & \mbox{if $v\in S_{\mu}$, } \mbox{$\mu\in \{1,2,\ldots, n\}\setminus \{i,i_1,\ldots, i_t\}$,}\\
c_i^j(v)& \mbox{if $v\in I_i^j$.}
\end{cases}
$$

First, note that if $vw\notin E(J_i^j)$, then either $v,w\in V(I_i^j)$ or $v\in V(I_i^j)$ and $w\in S_{\mu}$, for some $\mu \in \{1,2,\ldots, n\}\setminus \{i, i_1,\ldots,i_t\}$, say. 
In the first case, $f_i^j(v)\cap f_i^j(w)=c_i^j(v)\cap c_i^j(w)=\emptyset$, and the later case, the interval $f_i^j(v)=c_i^j(v)$ is a subset of $[x,y)$ and $f_i^j(w)=[y,z]$ and hence $f_i^j(v)\cap f_i^j(w)=\emptyset$. 

If $vw\in E(J_i^j)$, then either $v,w\in V(I_i^j)$ or $v\in V(I_i^j)$ and $w\in \mathop\bigcup\limits_{\gamma=1}^t V(S_{i_\gamma})$, or $v, w\not\in V(I_i^j)$. 
In all cases, $f_i^j(v)\cap f_i^j(w)\neq \emptyset$. Hence the claim 1. 

\noindent \textbf{Claim 2.} $E(G[H_1,\ldots,H_n])=\mathop\bigcap\limits_{i=1}^n\mathop\bigcap\limits_{j=1}^{k_i}E(J_i^j)$.

As $G[H_1,\ldots,H_n]$ is a spanning subgraph of $J_i^j$, for all $1\leq i\leq n,\ 1\leq j\leq k_i$,  it is enough to check that if $v$ and $w$ are not adjacent in $G[H_1,\ldots,H_n]$, then they are not adjacent in $J_i^j$ for some $i\in \{1,2,\ldots, n\}$ and $j\in \{1,2,\ldots, k_i\}$. Let $vw\notin E(G[H_1,\ldots,H_n])$. First, if $v,w\in V(H_i)$, for some $i\in \{1,2,\ldots, n\}$, then they are not adjacent in $I_i^j$ for some $j\in \{1,2,\ldots, k_i\}$ and hence they are not adjacent in $J_i^j$. 
Next, if $v\in V(H_i)$ and $w\in  V(H_j)$ for $i\neq j$, then $u_i$ and $u_j$ are not adjacent in $G$ and hence $v$ and $w$ are not adjacent in $J_i^j$. Hence the Claim 2.\\
From Claims 1, 2 and by Theorem \ref{1a}, the result follows.
\end{proof}
The following corollaries are the immediate consequence of Theorem \ref{3}. 
\begin{cor}
If $G$ is a graph and $G_r$ is the reduced graph of $G$, then $ box(G)\leq |V(G_r)|$.
\end{cor}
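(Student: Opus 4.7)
The key observation is that the introduction has already supplied the structural identity $G=G_r\bigl[\langle[x_1]\rangle,\ldots,\langle[x_k]\rangle\bigr]$, where $[x_1],\ldots,[x_k]$ are the $\sim_G$-classes and $k=|V(G_r)|$. So the plan is simply to feed this decomposition into the upper-bound half of Theorem~\ref{3}.

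First I would verify that each factor $\langle[x_i]\rangle$ has boxicity at most $1$. By definition $x\sim_G y$ means $N(x)=N(y)$, so if $x\neq y$ lay in the same class and were adjacent, then $y\in N(x)=N(y)$ would contradict loop-freeness. Hence every class $[x_i]$ is an independent set of $G$, which means $\langle[x_i]\rangle$ is edgeless. An edgeless graph is trivially an interval graph (take pairwise-disjoint intervals, or a single interval in the singleton case), so $box(\langle[x_i]\rangle)\leq 1$ for each $i$.

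Combining these two ingredients, the upper bound of Theorem~\ref{3} gives
\[
box(G)=box\bigl(G_r[\langle[x_1]\rangle,\ldots,\langle[x_k]\rangle]\bigr)\leq \sum_{i=1}^{k}box(\langle[x_i]\rangle)\leq k=|V(G_r)|,
\]
which is exactly the desired inequality.

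The only point that requires a little care (and is essentially the sole obstacle) is that Theorem~\ref{3} is stated under the hypothesis that at least one $H_i$ is not complete, a hypothesis that fails precisely when every class $[x_i]$ is a singleton, i.e.\ when $G=G_r$. Inspecting the proof shows, however, that this assumption is used only to make the lower bound $M$ meaningful; the construction of the interval graphs $J_i^j$ that witnesses the upper bound never invokes it. Thus the upper bound applies uniformly. Should one prefer a fully self-contained treatment of the degenerate case $G=G_r$, one can simply rerun the construction from the proof of Theorem~\ref{3} with each $H_i=K_1$ and $k_i=1$, producing $|V(G_r)|$ interval graphs whose intersection is $G$, and the corollary follows.
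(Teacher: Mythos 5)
Your proof is correct and is exactly the paper's intended argument: the corollary is stated there as an immediate consequence of Theorem~\ref{3} via the decomposition $G=G_r\bigl[\langle[x_1]\rangle,\ldots,\langle[x_k]\rangle\bigr]$ with each class independent, hence of boxicity at most $1$. Your extra observation---that the ``at least one $H_i$ not complete'' hypothesis is needed only for the lower bound $M$, so the upper bound still applies in the degenerate case where every class is a singleton---is a legitimate point the paper glosses over, and you resolve it correctly.
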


\begin{cor}\label{4}
If $G=K_{n_1,n_2,\ldots, n_k}$ (where $n_i\geq 2$) is a complete $k$-partite graph, then $box(G)=k$.
\end{cor}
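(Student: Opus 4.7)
The plan is to derive the corollary as a direct consequence of Theorem \ref{3} by expressing the complete multipartite graph as a suitable generalized join. Observe that if we take $G = K_k$ with vertex set $\{u_1,\ldots,u_k\}$ and replace $u_i$ by an edgeless graph $\overline{K_{n_i}}$ on $n_i$ vertices, then the resulting $K_k$-generalized join is exactly $K_{n_1,n_2,\ldots,n_k}$: two vertices are adjacent iff they lie in distinct parts, which corresponds to $u_i$ and $u_j$ being adjacent in $K_k$. Thus $K_{n_1,\ldots,n_k} = K_k[\overline{K_{n_1}},\overline{K_{n_2}},\ldots,\overline{K_{n_k}}]$.

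Next, I would record that $box(\overline{K_{n_i}}) = 1$ for each $i$: any edgeless graph is an interval graph, since assigning pairwise disjoint intervals on the real line to its vertices realises it as an intersection graph of intervals. Moreover, because $n_i \geq 2$, each graph $\overline{K_{n_i}}$ is not complete, so the hypothesis of Theorem \ref{3} is satisfied (in fact \emph{every} $H_i$ is non-complete, which is what drives the sharpness).

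For the upper bound, Theorem \ref{3} yields
\[
box(K_{n_1,\ldots,n_k}) \;\leq\; \sum_{i=1}^n box(\overline{K_{n_i}}) \;=\; \sum_{i=1}^k 1 \;=\; k.
\]
For the matching lower bound, I would use the quantity $M$ from Theorem \ref{3}. The whole vertex set $\{u_1,\ldots,u_k\}$ forms a clique of $K_k$, and each $\overline{K_{n_i}}$ is non-complete, so this clique contributes $\sum_{i=1}^k box(\overline{K_{n_i}}) = k$ to the maximum defining $M$. Hence $M \geq k$, which forces $box(K_{n_1,\ldots,n_k}) \geq M \geq k$. Combining the two inequalities gives $box(K_{n_1,\ldots,n_k}) = k$.

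There is no substantive obstacle: the corollary is essentially an instance where the upper and lower bounds of Theorem \ref{3} coincide. The only points to be careful about are the (trivial) verification that empty graphs are interval graphs, and the observation that the hypothesis $n_i \geq 2$ is exactly what makes every block non-complete, allowing the lower bound to match the upper bound.
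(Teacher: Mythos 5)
Your proof is correct and takes essentially the same route as the paper, which states Corollary \ref{4} as an immediate consequence of Theorem \ref{3} via exactly your decomposition $K_{n_1,\ldots,n_k}=K_k[\overline{K_{n_1}},\ldots,\overline{K_{n_k}}]$, with the upper bound $\sum_i box(\overline{K_{n_i}})=k$ and the clique lower bound $M=k$ coinciding. Your explicit checks --- that each $\overline{K_{n_i}}$ is an interval graph and that $n_i\geq 2$ makes every block non-complete so the full clique of $K_k$ is admissible in the maximum defining $M$ --- are precisely the instantiation the paper leaves implicit.
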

\begin{thm}\label{3.1} 
Let $G$ be a graph with $V(G)=\{u_1,u_2,\ldots, u_n\}$ such that $\langle \{u_1, u_2,\ldots, u_{\ell}\}\rangle$ is a clique in $G$ and $\ell<n$. If $H_1,H_2,\ldots, H_{\ell}$ are pairwise disjoint complete graphs and $H_{\ell+1},\ldots, H_n$ are pairwise disjoint graphs, then $box(G[H_1,H_2,\ldots, H_n])\leq \mathop\sum\limits_{i=\ell+1}^{n} box(H_i)$. 
\end{thm}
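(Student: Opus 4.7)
The plan is to adapt the construction of interval graphs from the proof of Theorem \ref{3}, but to build them only for the indices $i\in\{\ell+1,\ldots,n\}$ corresponding to the (possibly) non-complete factors $H_i$. For such $i$, set $k_i=box(H_i)$ and, by Theorem \ref{1a}, fix interval graphs $I_i^1,\ldots,I_i^{k_i}$ on $V(H_i)$ with $H_i=\bigcap_{j=1}^{k_i} I_i^j$. Following the notation of Theorem \ref{3}, I would define $R_i$ to be the graph on $V(G)$ with edge set $\{xu_i \mid x\in N_G(u_i)\}\cup\{xy\mid x\neq u_i,\ y\neq u_i\}$, let $S_m$ be the complete graph on $V(H_m)$ for each $m\neq i$, and set $J_i^j=R_i[S_1,\ldots,S_{i-1},I_i^j,S_{i+1},\ldots,S_n]$ for $i\in\{\ell+1,\ldots,n\}$ and $1\leq j\leq k_i$.

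The verification that each $J_i^j$ is an interval graph and that $G[H_1,\ldots,H_n]$ is a spanning subgraph of every $J_i^j$ is word-for-word Claim 1 of the proof of Theorem \ref{3}. By Theorem \ref{1a}, it then suffices to show that $E(G[H_1,\ldots,H_n])=\bigcap_{i=\ell+1}^n\bigcap_{j=1}^{k_i}E(J_i^j)$, and since $G[H_1,\ldots,H_n]$ sits inside every $J_i^j$ as a spanning subgraph, I only have to check that any non-edge of $G[H_1,\ldots,H_n]$ fails in at least one $J_i^j$.

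So take $vw\notin E(G[H_1,\ldots,H_n])$. If $v,w\in V(H_i)$ for a common $i$, then $i\leq\ell$ is impossible (there $H_i$ is complete, forcing $vw$ to be an edge), so $i\geq\ell+1$, and some $j$ gives $vw\notin E(I_i^j)$, hence $vw\notin E(J_i^j)$. If instead $v\in V(H_i)$ and $w\in V(H_{i'})$ with $i\neq i'$, then $u_iu_{i'}\notin E(G)$; the clique hypothesis on $\{u_1,\ldots,u_\ell\}$ rules out $i,i'\leq\ell$, so without loss of generality $i\geq\ell+1$. Since $u_{i'}\notin N_G(u_i)$ and $u_i$ is an endpoint, $u_iu_{i'}$ is not an edge of $R_i$, so no edges of $J_i^j$ run between $V(H_i)$ and $V(H_{i'})$, and in particular $vw\notin E(J_i^j)$. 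This yields $box(G[H_1,\ldots,H_n])\leq\sum_{i=\ell+1}^n k_i$, which is the claimed bound.

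The only substantive point beyond reusing the machinery of Theorem \ref{3} is the role of the clique hypothesis on $\{u_1,\ldots,u_\ell\}$: it is precisely this hypothesis which guarantees that every non-edge of $G[H_1,\ldots,H_n]$ must have at least one endpoint in a block $H_i$ with $i\geq\ell+1$, which is what allows the construction to drop the $\ell$ interval graphs associated to the complete factors while still certifying the correct edge set as an intersection. Everything else, namely the interval representation of $J_i^j$ and its spanning-supergraph property, is imported directly from Theorem \ref{3}.
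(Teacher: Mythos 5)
Your proposal is correct and is essentially identical to the paper's own proof: both reuse the graphs $J_i^j$ from Theorem \ref{3} only for indices $i\in\{\ell+1,\ldots,n\}$, invoke the interval-graph and spanning-subgraph verification verbatim, and handle the cross case by noting that the clique hypothesis forces any non-edge to have an endpoint in some $H_i$ with $i\geq \ell+1$, so that $u_iu_{i'}\notin E(R_i)$ kills it. Your explicit remark that a non-edge cannot have both endpoints among the complete blocks $H_1,\ldots,H_\ell$ is left implicit in the paper's phrase ``the only other possibility,'' but the argument is the same.
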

\begin{proof}
For $\ell+1\leq i\leq n$, let $box(H_i)=k_i$. By Theorem \ref{1a}, there exist  interval graphs $I_i^1,\ldots, I_i^{k_i}$ with vertex set $V(H_i)$ such that $H_i=\mathop\bigcap\limits_{j=1}^{k_i} I_i^j$. For $\ell+1\leq i\leq n$ and $1\leq j\leq k_i$, let $J_i^j$ be a graph as defined in Theorem \ref{3}. Then we can prove that $J_i^j$ is an interval graph and $G[H_1,H_2,\ldots, H_n]$ is a spanning subgraph of $J_i^j$ (the proof is same as in Theorem \ref{3}). 
We have to prove that $E(G[H_1,\ldots,H_n])=\mathop\bigcap\limits_{i=\ell+1}^n\mathop\bigcap\limits_{j=1}^{k_i}E(J_i^j)$. 

It is enough to prove that, if $xy\notin E(G[H_1,H_2,\ldots, H_n])$, then there exist, $\ell+1\leq i\leq n$ and $1\leq r\leq k_i$ such that $xy\notin E(J_i^r)$. 

If $x\in V(H_i)$ and $y\in V(H_j)$ for $\ell+1\leq i\leq j\leq n$, then by the same proof as in Theorem \ref{3}, $xy\notin E(J_i^r)$, for some $\ell+1\leq i\leq n$ and $1\leq r\leq k_i$. 

The only other possibility is: 
$x\in V(H_i)$ for some $1\leq i\leq \ell$ and $y\in V(H_j)$, for some $\ell+1\leq j\leq n$. In this case, $u_i$ and $u_j$ are not adjacent in $G$ and hence not adjacent in $R_j$ (see Theorem \ref{3}) and hence by the definition of $J_j^r$, $xy\notin J_j^r$, for all $1\leq r\leq k_j$. 
\end{proof}
\section{Boxicity of Zero-Divisor Graph of a ring}
In this section, we obtain a bound for boxicity of the zero-divisor graph of a ring.

\noindent Let $R$ be a ring. By Theorem \ref{A}, $\Gamma(R)\cong \Gamma_E(R)[\left\langle C_{x_1}\right\rangle,\left\langle C_{x_2}\right\rangle,\ldots, \left\langle C_{x_k}\right\rangle]$, where $|V(\Gamma_E(R))|=k$. 

\begin{thm}\label{4.1}
If the subgraph graph induced by $\{x_1,x_2,\ldots, x_r\}$ is a clique in $\Gamma_E(R)$ with $x^2=0$, for $1\leq i\leq r$, then $box(\Gamma(R))\leq k-r$, where $k=|V(\Gamma_E(R))|$.
\end{thm}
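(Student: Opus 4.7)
The plan is to apply Theorem \ref{3.1} directly, using Theorem \ref{A} to identify $\Gamma(R)$ as a $\Gamma_E(R)$-generalized join of the subgraphs induced by the equivalence classes, and then to bound each of the remaining factor-boxicities by $1$.

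First, I would invoke Theorem \ref{A}(i) to write $\Gamma(R)\cong \Gamma_E(R)\bigl[\langle C_{x_1}\rangle,\langle C_{x_2}\rangle,\ldots,\langle C_{x_k}\rangle\bigr]$. By Theorem \ref{A}(ii), the hypothesis $x_i^2=0$ for $1\leq i\leq r$ tells us that $\langle C_{x_1}\rangle,\ldots,\langle C_{x_r}\rangle$ are each complete graphs, while by Theorem \ref{A}(ii)--(iii), each of the remaining $\langle C_{x_{r+1}}\rangle,\ldots,\langle C_{x_k}\rangle$ is either a complete graph (when $x_i^2=0$) or a totally disconnected graph (when $x_i^2\neq 0$). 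In either case the subgraph is trivially an interval graph, so $box(\langle C_{x_i}\rangle)\leq 1$ for every $r+1\leq i\leq k$.

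Next, I would relabel if necessary so that the vertex $u_i$ of $\Gamma_E(R)$ corresponds to $C_{x_i}$, place the clique $\{x_1,\ldots,x_r\}$ in the role of $\{u_1,\ldots,u_\ell\}$ in Theorem \ref{3.1} (with $\ell=r$ and $n=k$), and take $H_i=\langle C_{x_i}\rangle$. The hypothesis of Theorem \ref{3.1} is then satisfied: $\langle\{u_1,\ldots,u_r\}\rangle$ is a clique in $G=\Gamma_E(R)$, the graphs $H_1,\ldots,H_r$ are pairwise disjoint complete graphs, and $H_{r+1},\ldots,H_k$ are arbitrary pairwise disjoint graphs. Theorem \ref{3.1} therefore yields
\[
box(\Gamma(R))\ =\ box\bigl(\Gamma_E(R)[\langle C_{x_1}\rangle,\ldots,\langle C_{x_k}\rangle]\bigr)\ \leq\ \sum_{i=r+1}^{k}box(\langle C_{x_i}\rangle)\ \leq\ \sum_{i=r+1}^{k}1\ =\ k-r,
\]
which is the desired bound.

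There is essentially no real obstacle here beyond the bookkeeping of matching the hypotheses; the only minor caveat is the degenerate case $r=k$, which is not actually covered by Theorem \ref{3.1} (it requires $\ell<n$), but in that case $\Gamma_E(R)$ is itself a clique all of whose equivalence classes are complete, forcing $\Gamma(R)$ to be a complete graph and so an interval graph. One would need to either exclude this case explicitly or note that the bound $k-r$ should be interpreted with the convention that the boxicity of a complete graph is at most $1$; apart from that, the argument is a clean application of the previously established machinery.
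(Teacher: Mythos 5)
Your proposal is correct and follows essentially the same route as the paper: invoke Theorem \ref{A} to write $\Gamma(R)\cong \Gamma_E(R)[\langle C_{x_1}\rangle,\ldots,\langle C_{x_k}\rangle]$ with each class complete or totally disconnected (hence of boxicity $1$, with the classes for $x_1,\ldots,x_r$ complete), and then apply Theorem \ref{3.1} with $\ell=r$, $n=k$ to get $box(\Gamma(R))\leq k-r$. Your extra remark handling the degenerate case $r=k$ (excluded by the hypothesis $\ell<n$ of Theorem \ref{3.1}, and glossed over in the paper's own three-line proof) is a small but genuine improvement in rigor.
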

\begin{proof}
By Theorem \ref{A},  for $1\leq i\leq k$, $\left\langle C_{x_i} \right\rangle$ is a complete graph or a totally disconnected graph. Then $box(\left\langle C_{x_i} \right\rangle)=1$, for $1\leq i\leq k$. By Theorem \ref{3.1}, $box(\Gamma(R))\leq k-r$.
\end{proof}
It is proved in \cite{And1} that if $R$ is a non-zero reduced ring, then there exists positive integer $k$ such that the compressed zero-divisor graph $\Gamma_E(R)$ of $R$ is isomorphic to $\Gamma(R')$, that is $\Gamma_E(R)\cong\Gamma(R')$, where $R'=\mathbb{Z}_2^k=\mathbb{Z}_2\times \mathbb{Z}_2 \times \ldots \times \mathbb{Z}_2$ ($k$-copies). It is proved that in \cite{Beck}, $\chi(\Gamma(R))=k=\omega(\Gamma(R))$.

\begin{thm}
If $R$ is a non-zero reduced ring, then $\chi(\Gamma(R))\leq box(\Gamma(R))\leq 2^{\chi(\Gamma(R))}-2$.
\end{thm}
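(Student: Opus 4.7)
The plan is to exploit the decomposition $\Gamma(R)\cong \Gamma_E(R)\bigl[\langle C_{x_1}\rangle,\ldots,\langle C_{x_m}\rangle\bigr]$ from Theorem \ref{A}(i) and apply the sandwich bound of Theorem \ref{3}. The first step is to use reducedness: no nonzero element of $R$ squares to zero, so by Theorem \ref{A}(iii) each $\langle C_{x_i}\rangle$ is totally disconnected, i.e.\ an edgeless graph, hence an interval graph with $box(\langle C_{x_i}\rangle)=1$.

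For the upper bound, the upper half of Theorem \ref{3} then yields
\begin{equation*}
box(\Gamma(R)) \leq \sum_{i=1}^{m} box(\langle C_{x_i}\rangle) = m = |V(\Gamma_E(R))|.
\end{equation*}
The identification $\Gamma_E(R)\cong \Gamma(\mathbb{Z}_2^k)$ from \cite{And1}, together with a direct count of nonzero zero-divisors of $\mathbb{Z}_2^k$ (every element except $0$ and the all-ones vector), gives $m = 2^k-2$; the Beck identity $\chi(\Gamma(R))=k$ then converts this into $box(\Gamma(R)) \leq 2^{\chi(\Gamma(R))}-2$.

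For the lower bound $\chi(\Gamma(R))\leq box(\Gamma(R))$, I would invoke the lower half of Theorem \ref{3}, which says $box(\Gamma(R))\geq M$, where $M$ is the maximum of $\sum_{j} box(\langle C_{x_{i_j}}\rangle)$ over cliques $\{x_{i_1},\ldots,x_{i_s}\}$ of $\Gamma_E(R)$ with each $\langle C_{x_{i_j}}\rangle$ non-complete. Because $\omega(\Gamma_E(R))=k=\chi(\Gamma(R))$, a $k$-clique exists in $\Gamma_E(R)$, and each totally disconnected $\langle C_{x_{i_j}}\rangle$ is non-complete as soon as $|C_{x_{i_j}}|\geq 2$, in which case it contributes $box=1$ to the sum, yielding $M \geq k = \chi(\Gamma(R))$. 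The main obstacle I anticipate is verifying that such a $k$-clique of non-singleton classes actually exists: writing $R\cong F_1\times\cdots\times F_k$ and tracking the annihilator equivalence $\sim_R$, one sees that this reduces to the underlying fields $F_i$ being large enough, so some care (or a supplementary argument handling small-field factors) will be required to close the gap.
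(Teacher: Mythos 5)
Your proposal reproduces the paper's proof almost verbatim: the upper bound via Theorem \ref{A} and the upper half of Theorem \ref{3} applied to $\Gamma_E(R)\cong\Gamma(\mathbb{Z}_2^k)$ with its $2^k-2$ totally disconnected classes, and the lower bound via the $k$-clique $\{e_1,\ldots,e_k\}$ in $\Gamma(\mathbb{Z}_2^k)$ together with the lower half of Theorem \ref{3}. The obstacle you flag at the end is, however, a genuine gap, and it is worth being precise about why: under $R\cong F_1\times\cdots\times F_k$ (a finite reduced ring is a product of fields), the class corresponding to $e_i$ is $\{(0,\ldots,0,u,0,\ldots,0): u\in F_i^{*}\}$, of size $|F_i|-1$. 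If some $F_i\cong\mathbb{Z}_2$ this class is a singleton, so $\langle C_{e_i}\rangle\cong K_1$, which \emph{is} complete and therefore excluded from the maximum $M$ in Theorem \ref{3}; the join inequality $box(G\vee H)\geq box(G)+box(H)$ genuinely fails for $H=K_1$, since adding a universal vertex to an interval graph yields an interval graph. The paper's own proof silently commits exactly the error you anticipate: it asserts each $\langle C_{e_i}\rangle$ is ``the complement of a complete graph'' and applies Theorem \ref{3}, but the complement of $K_1$ is $K_1$.

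Moreover, your hoped-for ``supplementary argument handling small-field factors'' cannot exist, because the lower bound is false in those cases: for $R=\mathbb{Z}_2\times\mathbb{Z}_2$ one has $\Gamma(R)\cong K_2$, so $box(\Gamma(R))=1<2=\chi(\Gamma(R))$; for $R=\mathbb{Z}_2\times F$ with $F$ any finite field, $\Gamma(R)$ is a star, again with boxicity $1$ and chromatic number $2$; and for $R=\mathbb{Z}_2^3$, $\Gamma(R)$ is the net (a triangle with a pendant at each vertex), which has boxicity $2$ but chromatic number $3$. So the inequality $\chi(\Gamma(R))\leq box(\Gamma(R))$ needs the additional hypothesis $|F_i|\geq 3$ for all $i$ (equivalently, every class $C_{e_i}$ has at least two elements), under which your argument --- identical to the paper's --- closes without difficulty. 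In short: you found the paper's proof, and you also correctly located its flaw; the remaining step is to recognize that the flaw is fatal to the stated theorem, not merely to the proof.
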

\begin{proof} By Theorem \ref{A}, $\Gamma(R)\cong \Gamma(\mathbb{Z}_2^k)[C_{x_1},\ldots, C_{x_{{2^k-2}}}]$. Since $R$ is reduced, $x_i^2\neq 0$, for $1\leq i\leq 2^k-2$. Therefore, $\left\langle C_{x_i} \right\rangle$'s are complement of complete graphs. Hence, by Theorem \ref{3}, $box(\Gamma(R))\leq 2^k-2$ (because $box(\left\langle C_{x_i} \right\rangle)=1$ and $|V(\mathbb{Z}_2^k)|= 2^k-2$). 

Next we prove the lower bound. For $1\leq i\leq k$, let $e_i$ be the element of $R'$ with the $i^{th}$-coordinate is one and other coordinate are zero. Clearly $\{e_i\ |\ 1\leq i\leq k\}$ forms a clique in $\Gamma(R)$ and $e_i^2\neq 0$, for $1\leq i\leq k$. Thus, $\left\langle C_{e_i} \right\rangle$ is the complement of complete graph and hence by Theorem \ref{3}, $k\leq box(\Gamma(R))$, because $box(\left\langle C_{e_i} \right\rangle)=1$.
\end{proof}

\section{Boxicity of Zero-Divisor Graph of the ring $\mathbb{Z}_N$}
In Section 5, we first prove that the chromatic number and the clique number of $\Gamma_E(\mathbb{Z}_N)$ are same for any positive integer $N$.

\begin{lem}\label{1.1}
If $N= \prod\limits_{i=1}^{a}p_i^{2n_i} \prod\limits_{j=1}^{b}q_j^{2m_j+1}$, where $p_i$'s and $q_j$'s are distinct  prime numbers,
then 
$\omega(\Gamma_E(\mathbb{Z}_N))=\chi(\Gamma_E(\mathbb{Z}_N))=\mathop\prod\limits_{i=1}^{a}(n_i+1)\mathop\prod\limits_{j=1}^{b}(m_j+1)+b-1.$
\end{lem}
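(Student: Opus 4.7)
The plan is to prove $\omega(\Gamma_E(\mathbb{Z}_N)) \ge K$ and $\chi(\Gamma_E(\mathbb{Z}_N)) \le K$ for $K := \prod_{i=1}^{a}(n_i+1)\prod_{j=1}^{b}(m_j+1)+b-1$; combined with the trivial $\omega \le \chi$, this gives the claimed double equality. I identify each vertex of $\Gamma_E(\mathbb{Z}_N)$ with a non-trivial proper divisor of $N$, encoded as a tuple $(\alpha_1,\ldots,\alpha_a;\beta_1,\ldots,\beta_b)$ with $0\le\alpha_i\le 2n_i$ and $0\le\beta_j\le 2m_j+1$, other than the zero tuple and the tuple representing $N$. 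Two vertices are adjacent iff $\alpha_i+\alpha_i'\ge 2n_i$ for every $i$ and $\beta_j+\beta_j'\ge 2m_j+1$ for every $j$.

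For the lower bound I would exhibit an explicit clique. Set $D=\prod p_i^{n_i}\prod q_j^{m_j+1}$, so that $D^2 = N\cdot\prod_j q_j$ and in particular $N \mid D^2$. Let $H$ be the set of divisors of $N$ that are multiples of $D$; then $|H|=\tau(N/D)=\prod(n_i+1)\prod(m_j+1)=:P$, and $H$ is a clique because any two members have product divisible by $D^2$, hence by $N$. For each $j\in\{1,\ldots,b\}$ set $D_j = D/q_j$; direct calculations $D_j\cdot d = N\cdot(d/D)\cdot\prod_{k\ne j}q_k$ for $d\in H$ and $D_j\cdot D_l = N\cdot\prod_{k\ne j,l}q_k$ show that $D_j$ is adjacent to every element of $H$ and to every other $D_l$, while $q_j^{m_j+1}\nmid D_j$ guarantees $D_j\notin H$. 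Hence $(H\setminus\{N\})\cup\{D_1,\ldots,D_b\}$ is a clique of size $(P-1)+b=K$.

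For the upper bound I construct a proper $K$-colouring with colour set $(H\setminus\{N\})\cup\{D_1,\ldots,D_b\}$ via the following rule. If $\beta_j(v)\le m_j$ for some $j$, colour $v$ by $D_k$ where $k$ is the smallest such index; otherwise ($v$ is ``$\beta$-high'') colour $v$ by the tuple $c_v\in H$ with $\alpha_i(c_v)=\max(n_i,\alpha_i(v))$ and $\beta_j(c_v)=\beta_j(v)$. The latter avoids $N$ because $c_v=N$ would force $\alpha_i(v)=2n_i$ and $\beta_j(v)=2m_j+1$ for all $i,j$, i.e.\ $v=N$. The $D_k$-class is independent since any two members have $\beta_k$-sum at most $2m_k<2m_k+1$. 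For the class of $c\in H\setminus\{N\}$, every member has $\beta_j$ equal to $\beta_j(c)$ and $\alpha_i$ equal to $\alpha_i(c)$ whenever $\alpha_i(c)>n_i$, so two distinct members must differ in some coordinate $i$ with $\alpha_i(c)=n_i$; both have $\alpha_i\le n_i$ there, and distinctness forces the $\alpha_i$-sum to be strictly less than $2n_i$, hence non-adjacency.

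The main obstacle is the upper-bound colouring: the naive clamped projection $(\alpha_i,\beta_j)\mapsto(\max(n_i,\alpha_i),\max(m_j+1,\beta_j))$ is not proper because the $\beta_j$ adjacency threshold $2m_j+1$ is attained exactly by the boundary pair $(m_j,m_j+1)$, so two tuples sharing a projection can remain adjacent. The fix is to treat $\alpha$ and $\beta$ asymmetrically: collapse $\alpha$-low values into the ``threshold block'' $\{\alpha_i\le n_i\}$ inside each $c_v$-class, where the even exponent $2n_i$ permits the equality-case argument at $(n_i,n_i)$ to force non-adjacency for distinct pairs, and peel off the $\beta$-low vertices into the $b$ dedicated classes $D_1,\ldots,D_b$, for which the odd exponent $2m_j+1$ makes the symmetric manoeuvre fail. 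This asymmetry is precisely the source of the additive $+(b-1)$ in the formula.
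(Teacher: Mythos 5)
Your proof is correct, and it has the same skeleton as the paper's: exhibit a clique of size $K=\prod_{i=1}^{a}(n_i+1)\prod_{j=1}^{b}(m_j+1)+b-1$, then produce a proper colouring whose colours are the clique elements, and invoke $\omega\le\chi$. Your core clique is literally the paper's: $H\setminus\{N\}$ equals $S=\{d : d\mid N\mid d^2\}$, the divisors with $\alpha_i\ge n_i$ and $\beta_j\ge m_j+1$. But the two remaining ingredients genuinely differ. For the $b$ extra clique vertices the paper takes the \emph{maximal} admissible elements $\xi_\eta=N/q_\eta^{m_\eta+1}$ (exponents $2n_i$, and $2m_j+1$ for $j\ne\eta$, and $m_\eta$ at $q_\eta$), whereas you take the \emph{minimal} ones $D_j=D/q_j$; both verifications are one-line exponent checks, and for squarefree $N$ the two choices coincide. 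The larger divergence is the colouring. The paper colours every $d\notin T$ by a single element of $T$ keyed only to the largest deficient coordinate: $\overline{d}$ with $p_\sigma$-exponent $n_\sigma$ at $\sigma(d)=\max\{i : r_i<n_i\}$ in its Case 1, and $\xi_{\ell(d)}$ with $\ell(d)=\max\{j : s_j\le m_j\}$ in its Case 2. This packs \emph{all} non-clique vertices into at most $a+b$ colour classes, which incidentally proves the stronger fact that $V(\Gamma_E(\mathbb{Z}_N))\setminus T$ is covered by $a+b$ independent sets; the price is a multi-case properness check (Case 1 vs.\ Case 2, $T$ vs.\ non-$T$). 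You instead peel off the $\beta$-deficient vertices into the $b$ classes of the $D_k$ (smallest deficient $q$-index) and colour each remaining vertex by its clamp $c_v\in H$; this spreads vertices over many classes but makes each class rigid — members are pinned in every coordinate except those with $\alpha_i(c)=n_i$, where distinctness forces the $\alpha_i$-sum below $2n_i$ — so your verification is shorter and more uniform. Your opposite priority order ($q$-deficiency first, versus the paper's $p$-deficiency first) is immaterial, and your observation that $c_v\ne N$ closes the one loophole in the clamp; like the paper's argument, yours tacitly uses that $N$ is composite so that the padding elements are genuine vertices.
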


\begin{proof}
Let $S=\{d\in V(\Gamma_E(\mathbb{Z}_N)) : d\mid N \mid d^2 \}$. Then, clearly $S$ is a clique in $\Gamma_E(\mathbb{Z}_N)$.\\
\textbf{Claim.}  $|S|= \prod\limits_{i=1}^{a}(n_i+1) \prod\limits_{j=1}^{b}(m_j+1)-1$. 
Let $d$ be a positive integer such that 
\begin{equation} \label{div}
d\mid N\mid d^2
\end{equation}
Since $d\mid N$, we have  $d$ is of the form $\prod\limits_{i=1}^{a}p_i^{\alpha_i} \prod\limits_{j=1}^{b}q_j^{\beta_j} $, where $0\leq \alpha_i\leq 2n_i$, $\forall  \ i$ and $0\leq \beta_j\leq 2m_j+1$, $\forall \ j$. Since $\frac{N}{d}\mid  d$, we have $$2n_i-\alpha_i\leq \alpha_i\mbox{ and }2m_j+1-\beta_j\leq \beta_j$$ for all $1\leq i\leq a$ and  $1\leq j\leq b$. Thus for all $1\leq i\leq a$, $1\leq j\leq b$, we have $$n_i\leq \alpha_i\leq 2n_i \mbox{ and }m_j+1 \leq \beta_j\leq 2m_j+1.$$ 
Therefore, the number of positive integers $d$ satisfying the condition \eqref{div} is $\prod\limits_{i=1}^{a}(n_i+1) \prod\limits_{j=1}^{b}(m_j+1)$. 
As the set of all positive integers satisfying the condition \eqref{div} is $S\cup \{N\}$, we get that 
$$|S|= \prod\limits_{i=1}^{a}(n_i+1) \prod\limits_{j=1}^{b}(m_j+1)-1.$$ 
Hence the claim. 

For  $1\leq \eta\leq b$, we let
\begin{equation} \label{xieta}
 \xi_\eta= \prod\limits_{i=1}^a p_i^{2n_i}  \prod\limits_{j=1\atop j\ne \eta }^bq_j^{2m_j+1} \times q_{\eta}^{m_{\eta}}.
\end{equation}
Then $\xi_\eta\notin S$ and $T=S\cup \{\xi_\eta : 1\leq \eta\leq b\}$ forms a clique in $\Gamma_E(\mathbb{Z}_N)$. Note that 
 \begin{equation} \label{cardT}
 |T|= \prod\limits_{i=1}^{a}(n_i+1)\mathop\prod\limits_{j=1}^{b}(m_j+1)+b-1.
 \end{equation}
Hence 
 $\omega(\Gamma_E(\mathbb{Z}_N))\geq  |T|$. \\
Next, we prove that $\chi(\Gamma_E(\mathbb{Z}_N))\leq |T|$. Let

\begin{equation}\label{dfact}
d=\prod\limits_{i=1}^{a}p_i^{r_i} \prod\limits_{j=1}^{b}q_j^{s_j}\in V(\Gamma_E(\mathbb{Z}_N))\setminus T.
\end{equation} 
Then, there exists $i$ with $1\leq i\leq a$ such that $r_i<n_i$ or there exists $j$ with $1\leq j\leq b$ such that $s_j\leq m_j$. 

\noindent \textbf{Case 1.} $r_i<n_i$  for some $i$ with $1\leq i\leq a$.\\
If $\sigma(d)= \max \{i : r_i<n_i, 1\leq i\leq a, \mbox{ where }r_i \mbox{ is as in \eqref{dfact}}\}$ and  $\overline{d}=\prod\limits_{i=1\atop i\ne \sigma(d)}^a p_i^{2n_i}  \times p_{\sigma(d)}^{n_{\sigma(d)}} \times \prod\limits_{j=1}^bq_j^{2m_j+1}$, then $\overline{d}\in S\subset T$ and $N\nmid d\overline{d}$, as $r_{\sigma(d)}+n_{\sigma(d)} <2n_{\sigma(d)}$. So $d$ and $\overline{d}$ are not adjacent in $\Gamma_E(\mathbb{Z}_N)$. 

\noindent \textbf{Case 2.} $r_i\geq n_i$ for all $1\leq i\leq a$ and there exists $j$ with $1\leq j\leq b$ such that $s_j\leq m_j$.\\ If $\ell(d)=\max\{j : s_j\leq m_j, 1\leq j\leq b\mbox{ where }s_j \mbox{ is as in \eqref{dfact}}\}$, then $\xi_{\ell(d)}\in T\setminus S$ (see \eqref{xieta} for the definition of $\xi_\eta$) and $N\nmid \xi_{\ell(d)}d$, as $s_{\ell(d)} +m_{\ell(d)} < 2m_{\ell(d)} +1$. Therefore, 
 $d$ and $\xi_{\ell(d)}$ are not adjacent in $\Gamma_E(\mathbb{Z}_N)$. 

Let $\psi: T \to \left\{1,2,\ldots, |T|\right\}$ be a bijective function. 
We extend the function $\psi$ over $V(\Gamma_E(\mathbb{Z}_N))$ as  $c:V(\Gamma_E(\mathbb{Z}_N))\to \left\{1,2,\ldots, |T|\right\}$ defined by 
$$c(d) = \begin{cases} \psi(d) & \mbox{ if }d\in T\\
\psi(\overline{d}) & \mbox{ if }d\not\in T \mbox{ and } d \mbox{ as in {\bf Case 1}}\\
\psi(\xi_{\ell(d)}) & \mbox{ if } d\not\in T \mbox{ and } d \mbox{ as in {\bf Case 2}}
\end{cases}$$

Finally, we prove that $c$ is a proper coloring of $\Gamma_E(\mathbb{Z}_N)$. Let $d',d''\in  V(\Gamma_E(\mathbb{Z}_N))$.  
\begin{itemize}
\item If $d', d''\in T$, then they receive distinct colors. 
\item If $d', d''\in V(\Gamma_E(\mathbb{Z}_N)) \backslash T$ and $d'$ is under Case 1 and $d''$ is under case 2, then  they receive different colors, as $\overline{d}\in S$ and $\xi_{\ell(d)}\in T\setminus S$.
\item If $d', d''\in V(\Gamma_E(\mathbb{Z}_N)) \backslash T$, both $d', d''$ are under Case 1, and $c(d')=c(d'')$, we show that $d'$ and $d''$ are not adjacent in $\Gamma_E(\mathbb{Z}_N)$. Indeed, $\psi(\overline{d'}) = \psi(\overline{d''})$ for some $\overline{d'}, \overline{d''}\in S$ corresponding to $d'$ and $d''$, respectively, as introduced in Case 1. Since $\psi$ is injective, we have  $\overline{d'} = \overline{d''}$ and hence $\sigma(d')=\sigma(d'')=\sigma$, say. This implies that $r_{\sigma}(d')+r_{\sigma}(d'')<2n_{\sigma}$, where $r_{\sigma}(d')$ and $r_{\sigma}(d'')$ are the powers of $p_\sigma$ in $d'$ and $d''$, respectively. 
Thus, $N\nmid d'd''$, and hence $d'$ and $d''$ are not adjacent in $\Gamma_E(\mathbb{Z}_N)$.
\item If $d', d''\in V(\Gamma_E(\mathbb{Z}_N)) \backslash T$, both $d', d''$ are under Case 2, and $c(d')=c(d'')$, then $d'$ and $d''$ are not adjacent in $\Gamma_E(\mathbb{Z}_N)$, by a similar argument used in the previous case.
\item If $d'\in T$, $d''\in  V(\Gamma_E(\mathbb{Z}_N)) \backslash T$, and $c(d')=c(d'')$, then $d'=\overline{d''}$ or $d'=\xi_{\ell(d'')}$. If $d'=\overline{d''}$, then $d'$ and $d''$ are not adjacent, as $n_{\sigma(d'')}+r_{\sigma(d'')}(d'')<2n_{\sigma(d'')}$, equivalently, $N\nmid d'd''$. Similarly, we can prove that $d'$ and $d''$ are not adjacent whenever $d'=\xi_{\ell(d'')}$.
\end{itemize}

 Therefore, $\chi(\Gamma_E(\mathbb{Z}_N))\le |T|$. Thus, the proof is completed.
\end{proof}

As an application of Theorem \ref{4.1} and Lemma \ref{1.1}, we have 
\begin{thm}
If $N= \prod\limits_{i=1}^{a}p_i^{2n_i} \prod\limits_{j=1}^{b}q_j^{2m_j+1}$, where $p_i$'s and $q_j$'s are distinct  prime numbers. 
Then $box(\Gamma(\mathbb{Z}_N))\leq \left(\mathop\prod\limits_{i=1}^{a}(2n_i+1)\mathop\prod\limits_{j=1}^{b}(2m_j+2)\right)-\left(\mathop\prod\limits_{i=1}^{a}(n_i+1)\mathop\prod\limits_{j=1}^{b}(m_j+1)\right)-1.$
\end{thm}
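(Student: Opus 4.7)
The plan is to apply Theorem~\ref{4.1} directly, reusing the clique $S$ already constructed in the proof of Lemma~\ref{1.1}. The key preliminary is identifying the vertex set of $\Gamma_E(\mathbb{Z}_N)$ combinatorially: the annihilator equivalence $\sim_R$ on $\mathbb{Z}_N$ depends only on $\gcd(\cdot,N)$, so its equivalence classes are in bijection with the divisors of $N$. After removing $C_0$ (the class with $\gcd=N$) and $C_1$ (the class of units, with $\gcd=1$), the set $V(\Gamma_E(\mathbb{Z}_N))$ is in bijection with $\{d : d\mid N,\ 1<d<N\}$. A standard divisor count for $N=\prod_i p_i^{2n_i}\prod_j q_j^{2m_j+1}$ then gives
\[
k := |V(\Gamma_E(\mathbb{Z}_N))| \;=\; \prod_{i=1}^{a}(2n_i+1)\prod_{j=1}^{b}(2m_j+2)-2.
\]

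Next I would recycle the clique $S=\{d\in V(\Gamma_E(\mathbb{Z}_N)) : d\mid N\mid d^2\}$ from the proof of Lemma~\ref{1.1}, which was already shown there to be a clique of size $|S|=\prod_{i=1}^{a}(n_i+1)\prod_{j=1}^{b}(m_j+1)-1$. The crucial point is that the defining condition $N\mid d^2$ says exactly that $d^2=0$ in $\mathbb{Z}_N$, and this is a well-defined property of the class $C_d$ because $\sim_R$ preserves annihilators (so if $y\sim d$ then $y^2\in\mathrm{Ann}(y)=\mathrm{Ann}(d)$ forces $y^2=0$ as well). Hence every vertex of $S$ is a class $C_d$ with $d^2=0$, which is precisely the hypothesis required by Theorem~\ref{4.1} with $r=|S|$.

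Applying Theorem~\ref{4.1} now delivers
\[
box(\Gamma(\mathbb{Z}_N))\leq k-r = \prod_{i=1}^{a}(2n_i+1)\prod_{j=1}^{b}(2m_j+2)-\prod_{i=1}^{a}(n_i+1)\prod_{j=1}^{b}(m_j+1)-1,
\]
which is precisely the claimed bound. There is no serious obstacle here: the statement is essentially a bookkeeping corollary, with Lemma~\ref{1.1} supplying both the clique $S$ and the divisor-count for $|S|$, and Theorem~\ref{4.1} converting this combinatorial data into the boxicity bound. The only minor point that needs to be written out cleanly is the bijection between $V(\Gamma_E(\mathbb{Z}_N))$ and the proper nontrivial divisors of $N$, since that is what makes the divisor-counting formula for $k$ applicable and what ensures the two $-1$ and $-2$ constants combine into the final $-1$.
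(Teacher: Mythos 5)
Your proposal is correct and follows essentially the same route as the paper's own proof: the paper likewise identifies $V(\Gamma_E(\mathbb{Z}_N))$ with the proper divisors of $N$ to get $k=\prod_{i=1}^{a}(2n_i+1)\prod_{j=1}^{b}(2m_j+2)-2$, reuses the clique $S=\{d : d\mid N\mid d^2\}$ with $|S|=\prod_{i=1}^{a}(n_i+1)\prod_{j=1}^{b}(m_j+1)-1$ from Lemma \ref{1.1}, and applies Theorem \ref{4.1} to conclude $box(\Gamma(\mathbb{Z}_N))\leq k-|S|$. One tiny slip worth fixing: your parenthetical justification that $x^2=0$ is a class invariant is misphrased (the containment $y^2\in \mathrm{Ann}(y)$ asserts $y^3=0$, not what is needed); the clean chain is $d^2=0\Rightarrow d\in \mathrm{Ann}(d)=\mathrm{Ann}(y)\Rightarrow dy=0\Rightarrow y\in \mathrm{Ann}(d)=\mathrm{Ann}(y)\Rightarrow y^2=0$, a point the paper itself leaves implicit.
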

\begin{proof} Let $\Gamma_E(\mathbb{Z}_N)$ be the compressed zero-divisor graph of $\mathbb{Z}_N$. Then the vertex set $V(\Gamma_E(\mathbb{Z}_N))$ of $\Gamma_E(\mathbb{Z}_N)$ is the set of all proper divisors, $d_1,d_2,\ldots, d_k$, of $N$. Hence 
 $|V(\Gamma_E(\mathbb{Z}_N))|=k=\mathop\prod\limits_{i=1}^a(2n_i+1)\mathop\prod\limits_{j=1}^b(2m_j+2)-2$.
 
 Let $S=\{d\in V(\Gamma_E(\mathbb{Z}_N)) : d\mid N \mid d^2 \}$. Then by the Lemma \ref{1.1},  
\begin{equation}\label{CardS}
|S|=\mathop\prod\limits_{i=1}^{a}(n_i+1)\mathop\prod\limits_{j=1}^{b}(m_j+1)-1, 
 \end{equation}
 $S$ is a clique in $\Gamma_E(\mathbb{Z}_N)$ and $x^2=0$ for $x\in S$. By Theorem \ref{4.1}, 
$box(\Gamma(\mathbb{Z}_N))\leq |V(\Gamma_R(R))|-|S|$.
Hence the result follows.
\end{proof}

Finally, we characterize a composite number $N$ for which the boxicity of the zero-divisor graph $\Gamma(\mathbb{Z}_N)$ is one. We can easily observe the following: If a graph $G$ contains an induced cycle of length 4 or more, then $box(G)\geq 2$, that  is, $G$ is not an interval graph. 

\begin{thm}
Let $N$ be any composite number. Then,  $box(\Gamma(\mathbb{Z}_N))=1$ if and only if either $N=p^n$ for some prime number $p$ and for some integer $n\ge 2$ or $N=2p$ for some odd prime number $p$.
\end{thm}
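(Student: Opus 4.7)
The plan is to prove both implications separately. For sufficiency I would exhibit explicit interval representations of $\Gamma(\mathbb{Z}_N)$ when $N=p^n$ or $N=2p$. For necessity I would argue by contrapositive, producing an induced $4$-cycle in $\Gamma(\mathbb{Z}_N)$ whenever $N$ is composite and outside these two families, then invoking the observation immediately preceding the theorem that an induced cycle of length $\geq 4$ forces boxicity at least $2$.

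For sufficiency, when $N=2p$ with $p$ an odd prime a direct calculation shows that $\Gamma(\mathbb{Z}_{2p})\cong K_{1,p-1}$ is a star (the vertex $p$ is adjacent to every even element, and no other edges exist since $(2i)(2j)=4ij$ is not a multiple of $2p$ for $1\leq i,j\leq p-1$), which is trivially an interval graph. When $N=p^n$ with $n\geq 2$ I would stratify the nonzero zero-divisors by $p$-adic valuation into classes $V_i=\{x:v_p(x)=i\}$ for $1\leq i\leq n-1$; two distinct vertices $x\in V_i$ and $y\in V_j$ are adjacent iff $i+j\geq n$, so $V_i$ itself is a clique when $2i\geq n$ and independent when $2i<n$. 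I would then write down an explicit interval assignment giving each $x\in V_i$ an interval with left endpoint $n-i$ (with small distinct right-end perturbations to separate vertices inside an independent level), and check that two such intervals intersect precisely when $i+j\geq n$.

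For necessity, assume $N$ is composite, not a prime power, and not of the form $2p$. Using Theorem \ref{A} the plan is to locate a proper divisor $e$ of $N$ with $1<e<N$ such that (a)~$e^2\not\equiv 0\pmod N$, so $\langle C_e\rangle$ is totally disconnected; (b)~$|C_e|=\phi(N/e)\geq 2$; and (c)~there exist two neighbors $d',d''$ of $e$ in $\Gamma_E(\mathbb{Z}_N)$ with $N\nmid d'd''$. Given such $e$, for any distinct $x_1,x_2\in C_e$ together with $y\in C_{d'}$ and $z\in C_{d''}$, the four vertices $x_1,y,x_2,z$ form an induced $C_4$ in $\Gamma(\mathbb{Z}_N)$. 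I would then produce such an $e$ case by case: if $N$ has at least three distinct prime divisors, take $e$ to be built from two of the prime powers and use the third prime to supply the non-adjacent pair $d',d''$; if $N=p^aq^b$ with $a+b\geq 3$, pick $e$ with either $2v_p(e)<a$ or $2v_q(e)<b$ to guarantee $e^2\not\equiv 0\pmod N$ while keeping two neighbors whose product is not divisible by $N$; and the remaining case $N=pq$ with $p,q\geq 3$ is handled directly by noting $\Gamma(\mathbb{Z}_{pq})\cong K_{p-1,q-1}$, which contains an induced $C_4$ since both parts have size at least $2$. The main obstacle will be this necessity case analysis, particularly borderline configurations with small exponents where the candidate divisors $e$ are very restricted and the choice satisfying (a)--(c) simultaneously must be exhibited by hand.
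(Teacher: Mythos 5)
Your sufficiency half is sound and is essentially the paper's own argument: the stratification of $\Gamma(\mathbb{Z}_{p^n})$ by $p$-adic valuation, with $x\in V_i$, $y\in V_j$ adjacent iff $i+j\geq n$, followed by intervals for the clique levels $2i\geq n$ and perturbed points for the independent levels, is exactly the paper's construction for $n\geq 4$ (the paper uses intervals $[0,\lfloor n/2\rfloor - i +1]$ and points, but it is the same representation up to reparametrization). Your identification $\Gamma(\mathbb{Z}_{2p})\cong K_{1,p-1}$ is correct and in fact repairs a slip in the paper, which asserts $\Gamma(\mathbb{Z}_{2p})\cong K_2$; either way a star is an interval graph, so this is cosmetic.

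The genuine gap is in the necessity half, and it sits precisely at the borderline configuration you flagged as the ``main obstacle.'' Your template --- a divisor $e$ with (a) $e^2\not\equiv 0 \pmod N$, (b) $|C_e|\geq 2$, (c) two neighbours $d',d''$ of $e$ in $\Gamma_E(\mathbb{Z}_N)$ with $N\nmid d'd''$ --- works for three or more primes and for $N=pq$, but for $N=2p^2$ ($p$ an odd prime) \emph{no} choice of $e$ satisfies (a)--(c): the proper-divisor classes are $C_2$, $C_p$, $C_{2p}$, $C_{p^2}$; the only neighbour class of $C_2$ is $C_{p^2}$, the only neighbour class of $C_p$ is $C_{2p}$, $C_{2p}$ is a clique class since $(2p)^2\equiv 0$, and $|C_{p^2}|=\phi(2)=1$. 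Moreover this failure cannot be patched, because $\Gamma(\mathbb{Z}_{2p^2})$ contains no induced $C_4$ at all and is in fact an interval graph: for $N=18$, assign $[0,1]$ to $6$ and $12$, $[1,2]$ to $9$, distinct points of $(0,1)$ to $3$ and $15$, and distinct points of $(1,2]$ to $2,4,8,10,14,16$; checking all adjacencies and non-adjacencies shows $box(\Gamma(\mathbb{Z}_{18}))=1$, although $18$ is neither $p^n$ nor $2p$. So the theorem as stated is false, and the paper's own proof breaks at the same spot: in its Case 1 the four exhibited vertices degenerate when $s=2$, since the second and fourth are both $p_1^{n_1}$ (for $N=18$ the list reads $6,9,2,9$). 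For $N=p^2q$ with $q$ odd an alternative $C_4$ does exist (two vertices of $C_{p^2}$, one each of $C_q$ and $C_{pq}$, since $N\nmid pq^2$), but for $q=2$ the class $C_{p^2}$ is a singleton and no such cycle exists. Any completion of your case analysis must therefore add $N=2p^2$ to the list of exceptions rather than eliminate it; the correct characterization is $box(\Gamma(\mathbb{Z}_N))=1$ iff $N=p^n$, $N=2p$, or $N=2p^2$.
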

\begin{proof} 
For a proper divisor $d$ of $N$, define $A_d=\{a\in \mathbb{Z}_N: gcd(N,a)=d)\}$. It is clear that $A_{d_i}\cap A_{d_j}=\emptyset$ whenever $d_i$ and $d_j$ are distinct proper divisors of $N$ and $\Gamma(\mathbb{Z}_N)\cong\Gamma_E(\mathbb{Z}_N)[A_{d_1},A_{d_2},\ldots, A_{d_k}]$, where $d_i$'s are all proper divisors of $N$. 
First, let us assume that $box(\Gamma(\mathbb{Z}_N))=1$.
  
Suppose  $N=p_1^{n_1}p_2^{n_2}\ldots p_s^{n_s}$, where 
$p_i$'s are distinct primes. 

\noindent\textbf{Case 1.}  $n_i\geq 2$ for some $i$ with $1\leq i\leq s$. 

We  assume that $n_1\geq 2$. If $s\geq 2$, then the set of vertices $$\{(p_1^{n_1-1}p_2^{n_2}\ldots p_s^{n_s}),
(p_1^{n_1}p_3^{n_3}\ldots p_s^{n_s}),
(p_2^{n_2}\ldots p_s^{n_s}), p_1^{n_1}\}$$ in $\Gamma(\mathbb{Z}_N)$ induces a cycle of length 4, a contradiction. Therefore, $s=1$ and hence $N=p_1^{n_1}$.

\noindent\textbf{Case 2.} $n_i=1$, for all $1\leq i\leq s$.

Since $N$ is composite number, we have $s\geq 2$. 
Suppose that there are two odd prime factors for $N$, say $p_1\geq 3$ and $p_2\geq 3$, then $K_2[\langle A_{p_1p_3p_4\ldots p_s}\rangle, \langle A_{p_2p_3\ldots p_s}\rangle]=\langle A_{p_1p_3p_4\ldots p_s}\rangle\vee \langle A_{p_2p_3\ldots p_s}\rangle$ is an induced subgraph of  $\Gamma(\mathbb{Z}_N)$. Since $A_{p_1p_3p_4\ldots p_s}$ and  $A_{p_2p_3\ldots p_s}$ are independent subsets of $\Gamma(\mathbb{Z}_N)$, the set $\{(p_1p_3p_4\ldots p_s),2(p_1p_3p_4\ldots p_s), (p_2p_3\ldots p_s),$\\ $ 2(p_2p_3\ldots p_s)\}$ of vertices of $\Gamma(\mathbb{Z}_N)$ induces a cycle of length $4$, (because $|A_{p_1p_3p_4\ldots p_s}|\geq 2$ and $|A_{p_2p_3\ldots p_s}|\geq 2$), a contradiction. So, $N=2p$ for some odd prime $p$. 

Conversely, let us assume that either $N=p^n$ for some prime number $p$ and for some integer $n\ge 2$ or $N=2p$ for some odd prime number $p$. 
If $N=2p$, where $p$ is a prime, then $\Gamma(\mathbb{Z}_N)\cong K_2$ and hence $box(\Gamma(\mathbb{Z}_N))=1$. 
Now let us assume that  $N=p^n$, where $p$ is a prime and $n \geq 2$. When $n=2$, $\Gamma(\mathbb{Z}_{p^2})\cong K_{p-1}$ and hence $box(\Gamma(\mathbb{Z}_{p^2}))=1$. 
When $n=3$, $\Gamma(\mathbb{Z}_p^3)\cong \Gamma_E(\mathbb{Z}_{p^3})[\left\langle A_p\right\rangle,\left\langle A_{p^2}\right\rangle]$, where $A_p$ is an independent set, $\left\langle A_{p^2}\right\rangle$ is complete and every vertex of $A_p$ is adjacent to every vertex of $A_{p^2}$. Let $A_p=\{v_j:1\leq j\leq |A_p|\}$. We define an interval representation $f$ of $\Gamma(\mathbb{Z}_{p^3})$. For $x\in A_{p^2}$, define $f(x)=[0,1]$ and for $v_j\in A_p$, define $f(v_j)=\{\frac{1}{j}\}$. One can easily verify that $f$ is an interval representation of $\Gamma(\mathbb{Z}_{p^3})$ and therefore $box(\Gamma(\mathbb{Z}_N))=1$.

When $n\geq 4$, $\Gamma(\mathbb{Z}_N)=\Gamma_E(\mathbb{Z}_N)[\left\langle A_p\right\rangle,\left\langle A_{p^2}\right\rangle,\ldots, \left\langle A_{p^{n-1}}\right\rangle]$. For $1\leq i\leq n-1$, let $A_{p^i}=\{v_{i,j}\ : 1\leq j\leq |A_{p^i}|\}$. Here one can easily observe the following.
\begin{itemize}
\item If $n$ is even, then we see that the following.\\
(i) $A_{p^i}$ is an independent set if and only if $i\in \{1,2,\ldots, \frac{n}{2}-1\}$; \\
(ii) $\left\langle A_{p^j}\right\rangle$ is complete if and only if $j\in \{\frac{n}{2}, \ldots, n-1\}$;\\
(iii) For $i\in \{1,2,\ldots, \frac{n}{2}-1\}$, every vertex of $A_{p^i}$ is adjacent to every vertex of $A_{p^{n-j}}$ if and only if  $j\in \{1,2,\ldots, i\}$;\\
(iv) the set $\mathop\bigcup\limits_{j=1}^{\frac{n}{2}-1}A_{p^j}$ is independent and  $\left\langle \mathop\bigcup\limits_{j=\frac{n}{2}}^{n-1}A_{p^j} \right\rangle$ is complete.
\item If $n$ is odd, then we see that the following.\\
(i) $A_{p^i}$ is an independent set if and only if $i\in \{1,2,\ldots, \left\lfloor \frac{n}{2} \right\rfloor\}$;\\
(ii) $\left\langle A_{p^j}\right\rangle$ is complete if and only if $j\in \{\left\lceil \frac{n}{2}\right\rceil, \ldots, n-1\}$;\\
(iii) For $i\in \{1,2,\ldots, \left\lfloor \frac{n}{2}\right\rfloor\}$, every vertex of $A_{p^i}$ is adjacent to every vertex of $A_{p^{n-j}}$ if and only if $j\in \{1,2,\ldots, i\}$;\\
(iv) the set $\mathop\bigcup\limits_{j=1}^{\left\lfloor \frac{n}{2}\right\rfloor}A_{p^j}$ is an independent set and $\left\langle \mathop\bigcup\limits_{j=\left\lceil \frac{n}{2}\right\rceil}^{n-1}A_{p^j} \right\rangle$ is complete.
\end{itemize}
%
We now construct an interval representation $f$ of $\Gamma(\mathbb{Z}_N)$ as follows. 

For $1\leq i\leq \left\lfloor \frac{n}{2}\right\rfloor-1$, define
$$f(v_{n-i,j})= \left[0,\left\lfloor \frac{n}{2} \right\rfloor-i+1\right], \mbox{ for every } j\in\{1,2,\ldots, |A_{p^{n-i}}|\}.$$

 For $\left\lceil \frac{n}{2}\right\rceil+1\leq i\leq n-1$, 
let $x_{n-i,1},x_{n-i,2},\ldots, x_{n-i,|A_{p^{n-i}}|}$ be distinct points in 
$\left(i-\left\lceil \frac{n}{2}\right\rceil, i-\left\lceil \frac{n}{2}\right\rceil+1\right)$. Define
$$f(v_{n-i,j})=\{x_{n-i,j}\}, \mbox{ for every } j\in \{1,2,\ldots, |A_{p^{n-i}}|\}.$$

\noindent If $n$ is even, define $$f\big(v_{\frac{n}{2},j}\big)=[0,1], \mbox{ for every } j\in \{1,2,\ldots,  |A_{p^{\frac{n}{2}}}|\}.$$

\noindent If $n$ is odd, consider distinct points $y_{\left\lfloor \frac{n}{2}\right\rfloor,1},y_{\left\lfloor \frac{n}{2}\right\rfloor,2},\ldots, y_{\left\lfloor \frac{n}{2}\right\rfloor,|A_{p^{\left\lfloor \frac{n}{2}\right\rfloor}}|}$ in $(0,1)$ and define 
$$f\Big(v_{\left\lfloor \frac{n}{2}\right\rfloor,j}\Big)=\Big\{y_{\left\lfloor \frac{n}{2}\right\rfloor,j}\Big\}, \mbox{ for every } j\in\Big\{1,2,\ldots, |A_{p^{\left\lfloor \frac{n}{2}\right\rfloor}}|\Big\}$$ and 
$$f\Big(v_{\left\lceil \frac{n}{2}\right\rceil,j}\Big)=[0,1], \mbox { for every } j\in\{1,2,\ldots, |A_{p^{\left\lceil \frac{n}{2}\right\rceil}}|\}.$$
One can verify that $f$ is an interval representation of $\Gamma(\mathbb{Z}_N)$ and hence $box(\Gamma(\mathbb{Z}_N))=1$.

\end{proof}
\subsection*{Acknowledgment}  This research was supported by the University Grant Commissions Start-Up Grant, Government of India grant No. F. 30-464/2019 (BSR) dated 27.03.2019.

\end{document}